\theoremstyle{plain}
\newtheorem{theorem}{Theorem}[section]
\theoremstyle{definition}
\newtheorem{definition}[theorem]{Definition}
\newtheorem{example}[theorem]{Example}
\newtheorem{conjecture}[theorem]{Conjecture}
\newtheorem{remark}[theorem]{Remark}
\newtheorem{proposition}[theorem]{Proposition}
\newtheorem{lemma}[theorem]{Lemma}
\newtheorem{corollary}[theorem]{Corollary}
\DeclareMathOperator{\Ind}{Ind}
\DeclareMathOperator{\Res}{Res}
\DeclareMathOperator{\Park}{Park}
\newcommand{\C}{\mathbb{C}}
\newcommand{\Z}{\mathbb{Z}}
\newcommand{\trace}{\mathrm{Tr}}
\title{On Extending type $B$ Parking Spaces}
\author{Anthony Adams, Joshua Dorsam, Lily Levitsky, Megan Mann}
\date{\today}
\begin{document}
\begin{abstract}
    Armstrong, Reiner, and Rhoades defined for all Weyl groups $W$ a natural representation of $W$ called the $W$-parking space. The type $B$ parking space is the representation $\mathbb{C}[(\mathbb{Z}/(2n+1)\mathbb{Z})^n]$ of the $n$th signed symmetric group. We consider more general representations of the form $\mathbb{C}[(\mathbb{Z}/m\mathbb{Z})^n]$; we conjecture that this representation extends to the $(n+1)$th signed symmetric group for all $n$ and $m$. We prove this conjecture when $m = 3$ or when $n \leq 2$.
\end{abstract}

\maketitle

\section{Introduction}

In \cite{Haiman}, Haiman defined the parking space $\Park_n$, the representation of the symmetric group $S_n$ spanned by parking functions. It is a rich object of study in algebraic combinatorics with deep connections to Catalan combinatorics. Berget and Rhoades proved that the $S_n-$action on $\Park_n$ in fact extends to an action of $S_{n+1}$ \cite{bergetrhoades14}.
Since then, other extensions of the parking space have been constructed \cite{konvatewari21, KST21}.

Armstrong, Reiner, and Rhoades introduced an analog of the parking space representation for any Coxeter group $W$ \cite{ARR15}.
In this paper, we consider a version of Berget and Rhoades' question for the Coxeter group $B_n,$ the signed symmetric group on $n$ letters. There is an inclusion $B_n\subset B_{n+1}$ given by choosing the elements of $B_{n+1}$ that fix the $(n+1)^{\text{st}} $ letter. We attempt to extend the parking space representation for $B_n$ to $B_{n+1}$ along this inclusion. The type $B$ parking space is the representation $\mathbb{C}[(\Z/(2n+1)\Z)^n]$ spanned by the finite $B_n$-set $(\Z/(2n+1)\Z)^n$.
More generally, we consider the family of $B_n$-representations $\mathbb{C}[(\mathbb{Z}/m\mathbb{Z})^n]$ for all $n,m \geq 1$. Our main conjecture is:
\begin{conjecture}\label{conjecture: main}
    For all $m,n\ge 1,$ there exists a representation $V_{n,m}$ of $B_{n+1}$ such that $$\mathrm{Res}^{B_{n+1}}_{B_n}V_{n,m}\cong \mathbb{C}[(\mathbb{Z}/m\mathbb{Z})^n].$$
\end{conjecture}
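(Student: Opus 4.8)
The plan is to recast the extension problem inside the representation ring of $B_{n+1}$ and reduce it to a combinatorial feasibility question on the branching graph. First I would compute the character of $\mathbb{C}[(\mathbb{Z}/m\mathbb{Z})^n]$ by counting fixed points: writing $w\in B_n$ in terms of its positive signed cycles (those whose sign product is $+1$) and its negative signed cycles, a vector $v\in(\mathbb{Z}/m\mathbb{Z})^n$ is fixed by $w$ exactly when it is constant along each positive cycle and satisfies $2v=0$ along each negative cycle, so the character value is $m^{p(w)}\gcd(2,m)^{q(w)}$, where $p(w)$ and $q(w)$ count positive and negative cycles. This character is cycle-multiplicative, so its decomposition is governed by the wreath-product Schur--Weyl correspondence for $B_n=\mathbb{Z}/2\mathbb{Z}\wr S_n$: decomposing $U:=\mathbb{C}[\mathbb{Z}/m\mathbb{Z}]$ under the negation action of $\mathbb{Z}/2\mathbb{Z}$ into $a$ copies of the trivial character and $b$ copies of the sign character (so that $a+b=m$), the module $\mathbb{C}[(\mathbb{Z}/m\mathbb{Z})^n]\cong U^{\otimes n}$ decomposes as $\bigoplus_{(\lambda,\mu)} c_{\lambda,\mu}\,\chi^{(\lambda,\mu)}$ with multiplicities $c_{\lambda,\mu}=s_\lambda(1^a)\,s_\mu(1^b)$, where $s_\lambda(1^a)$ is the dimension of the $GL_a$-irreducible indexed by $\lambda$ and the sum runs over bipartitions $(\lambda,\mu)$ of $n$.

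With the decomposition in hand the extension problem becomes purely linear. By the type $B$ branching rule, $\mathrm{Res}^{B_{n+1}}_{B_n}\chi^{(\lambda,\mu)}=\sum_{\lambda^-}\chi^{(\lambda^-,\mu)}+\sum_{\mu^-}\chi^{(\lambda,\mu^-)}$, where $\lambda^-,\mu^-$ range over single box removals. Hence a candidate $V_{n,m}=\bigoplus d_{\lambda,\mu}\,\chi^{(\lambda,\mu)}$ (sum over bipartitions of $n+1$) restricts correctly if and only if the nonnegative integers $d_{\lambda,\mu}$ satisfy, for every bipartition $(\alpha,\beta)$ of $n$, the box-addition equation $c_{\alpha,\beta}=\sum_{\lambda=\alpha^+}d_{\lambda,\beta}+\sum_{\mu=\beta^+}d_{\alpha,\mu}$. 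I would next exhibit the natural rational solution: the Pieri rule evaluated at $1^a$ gives $\sum_{\lambda=\alpha^+}s_\lambda(1^a)=a\,s_\alpha(1^a)$, so the choice $d_{\lambda,\mu}=\tfrac1m\,s_\lambda(1^a)s_\mu(1^b)$ makes the right-hand side collapse to $\tfrac{a+b}{m}\,s_\alpha(1^a)s_\beta(1^b)=c_{\alpha,\beta}$. This is exactly the representation-ring shadow of the fact that $\mathrm{Res}^{B_{n+1}}_{B_n}\mathbb{C}[(\mathbb{Z}/m\mathbb{Z})^{n+1}]\cong\mathbb{C}[(\mathbb{Z}/m\mathbb{Z})^n]^{\oplus m}$, so dividing the level-$(n+1)$ module by $m$ produces a consistent rational class function on $B_{n+1}$ restricting to the one we want.

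The main obstacle is that this rational solution is almost never integral, so the heart of the conjecture is to replace it by an honest nonnegative integer solution $\{d_{\lambda,\mu}\}$ with the same box-addition row sums. I would treat this as an integer transportation/flow feasibility problem on the bipartite Young lattice, perturbing the barycentric solution $\tfrac1m\,s_\lambda(1^a)s_\mu(1^b)$ while preserving every prescribed sum; a Hall-type argument, or better an explicit realization of $V_{n,m}$ as $\mathbb{C}[X]$ for a concrete $B_{n+1}$-set $X$ of size $m^n$, would settle the general case. For the cases we can actually prove, this difficulty is tamed by sparsity and size: when $m=3$ we have $a=2,b=1$, which forces $\mu$ to be a single row and $\lambda$ to have at most two rows, collapsing the system to a small, explicitly solvable family; and for $n\le 2$ the entire system is small enough to solve by hand. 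The genuinely hard combinatorial core for general $(m,n)$ — producing a nonnegative integral refinement of the $\tfrac1m$-scaled solution — is precisely why the statement is posed as a conjecture rather than a theorem.
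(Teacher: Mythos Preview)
The statement you are addressing is a \emph{conjecture}, and the paper does not prove it in general; it establishes only the special cases $m=3$ (Theorem~\ref{theorem: m=3}) and $n\le 2$ (Theorem~\ref{theorem: n le 2}), together with computational evidence. Your proposal is honest about this: you set up the framework, exhibit a rational solution, and then correctly identify the integrality step as the unresolved obstacle. So there is no disagreement about correctness---neither you nor the paper has a proof of the full statement.

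Your framework is essentially identical to the paper's. Your fixed-point character computation matches Lemmas~\ref{lemma: odd character formula} and~\ref{lemma: even character formula}; your wreath-product Schur--Weyl decomposition with multiplicities $s_\lambda(1^a)s_\mu(1^b)$ is exactly Propositions~\ref{OPdecomp} and~\ref{epdecomp } (the paper writes $d(\lambda,a)$ for $s_\lambda(1^a)$ via the Weyl dimension formula, with $(a,b)=(k{+}1,k)$ when $m=2k{+}1$ and $(a,b)=(\ell{+}1,\ell{-}1)$ when $m=2\ell$); and your reduction to a nonnegative integer box-addition system via the type~$B$ branching rule is precisely how the paper attacks the special cases. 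For $m=3$ the paper solves the resulting recursion explicitly (Theorem~\ref{thm:OPn1Thm}), and for $n\le 2$ it inverts the small restriction matrix $R_{n,m}$ by hand---both in line with what you sketch in your final paragraph.

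You do contribute one observation the paper does not make explicit: the barycentric rational solution $d_{\lambda,\mu}=\tfrac{1}{m}s_\lambda(1^a)s_\mu(1^b)$, coming from $\mathrm{Res}^{B_{n+1}}_{B_n}\mathbb{C}[(\mathbb{Z}/m\mathbb{Z})^{n+1}]\cong \mathbb{C}[(\mathbb{Z}/m\mathbb{Z})^n]^{\oplus m}$. This is a clean way to see that the obstruction is purely an integrality/nonnegativity issue rather than a character-theoretic one, and it suggests the flow/transportation reformulation you mention. But turning that into an actual nonnegative integral solution for all $(m,n)$ is exactly the open problem, and nothing in your proposal closes that gap.
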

In particular, we conjecture that there exists a representation of $B_{n+1}$ whose restriction to $B_n$ is isomorphic to the type $B$ Parking Space. 
\begin{remark}
    It may be evidence towards Conjecture \ref{conjecture: main} that the action of $S_n$ on $(\Z/m\Z)^n$ given by permuting entries is always isomorphic to the restriction of an $S_{n+1}-$set,  for example, the natural action of $S_{n+1}$ on $\{x\in (\Z/m\Z)^{n+1}|\sum x_i =0\}$. 
\end{remark}

\begin{remark}
    It is natural to consider the same generalization for the type $A$ version of the question: does the action of $S_n$ on $\C[ (\Z/m\Z)^{n-1}]$ extend to a representation of $S_{n+1}$ for all $n$? 
    However, this representation does not extend when $m=8$ and $n=11$.
    Note that this does not disprove Conjecture \ref{conjecture: main}.
\end{remark}

While Conjecture \ref{conjecture: main} remains open in full generality, we were able to prove some special cases.
\begin{theorem}\label{theorem: m=3}
    For $m=3$, there exists an extension of $\mathbb{C}[(\mathbb{Z}/m\mathbb{Z})^n]$ to $B_{n+1}$. 
\end{theorem}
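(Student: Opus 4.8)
The plan is to reduce the statement to a purely combinatorial feasibility question about the branching rule for $B_n\subset B_{n+1}$, and then to settle that question explicitly when $m=3$.

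First I would analyze $(\Z/3\Z)^n$ as a $B_n$-set. Since the sign change $x\mapsto -x$ on $\Z/3\Z$ fixes $0$ and swaps $1,2$, the $B_n$-orbits are indexed by the number $k$ of nonzero coordinates, and the orbit of the vector with $k$ ones has stabilizer $S_k\times B_{n-k}$. Hence
\[
\C[(\Z/3\Z)^n]\;\cong\;\bigoplus_{k=0}^{n}\Ind_{S_k\times B_{n-k}}^{B_n}\mathbf 1 .
\]
Writing an arbitrary $B_{n+1}$-representation as $\bigoplus_{(\alpha,\beta)}d_{(\alpha,\beta)}V_{(\alpha,\beta)}$ (sum over bipartitions of $n+1$, $V_{(\alpha,\beta)}$ the corresponding irreducible), the condition $\Res^{B_{n+1}}_{B_n}V\cong\C[(\Z/3\Z)^n]$ becomes, via the branching rule (restriction removes a single box from $\alpha$ or from $\beta$), the linear system
\[
\sum_{(\alpha,\beta)\searrow(\lambda,\mu)}d_{(\alpha,\beta)}=c_{(\lambda,\mu)}\qquad\text{for every bipartition }(\lambda,\mu)\vdash n,
\]
where $c_{(\lambda,\mu)}$ is the multiplicity of $V_{(\lambda,\mu)}$ in $\C[(\Z/3\Z)^n]$ and $\searrow$ denotes removal of one box. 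Thus the theorem is equivalent to the existence of a non-negative integer solution $(d_{(\alpha,\beta)})$.

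Next I would compute the multiplicities $c_{(\lambda,\mu)}$ using the Frobenius characteristic for wreath products, recording the trivial and sign characters of $\Z/2$ by two alphabets $X,Y$. Because $\C[\Z/3\Z]\cong 2\cdot\mathbf 1\oplus\operatorname{sgn}$ as a $B_1$-module and the characteristic turns induction products into ordinary products, one obtains the clean generating function $\sum_n\operatorname{ch}\C[(\Z/3\Z)^n]\,t^n=H_X(t)^2H_Y(t)$, where $H_X(t)=\sum_k h_k(X)t^k$. Expanding $H_X(t)^2$ into Schur functions shows that the only irreducibles occurring are the $V_{(\lambda,(b))}$ with $\ell(\lambda)\le 2$ and second partition a single row $(b)$, with multiplicity $c_{(\lambda,(b))}=\lambda_1-\lambda_2+1$. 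Applying the same analysis to the vanishing equations ($c_{(\lambda,\mu)}=0$) forces the support of any solution into the same family: $d_{(\alpha,\beta)}$ can be nonzero only when $\ell(\alpha)\le 2$ and $\beta=(b)$. In these coordinates the system becomes the explicit recurrence
\[
d_{(\alpha_1+1,\alpha_2),\,b}+[\alpha_1>\alpha_2]\,d_{(\alpha_1,\alpha_2+1),\,b}+d_{(\alpha_1,\alpha_2),\,b+1}=\alpha_1-\alpha_2+1
\]
for all $\alpha_1\ge\alpha_2\ge 0$ and $b\ge 0$ with $\alpha_1+\alpha_2+b=n$.

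The main obstacle is producing a non-negative integer solution of this recurrence for all $n$. The recurrence is triangular: once the boundary values $d_{(\alpha,\emptyset)}$ (those with $b=0$) are fixed, every remaining $d_{(\alpha,(b))}$ is determined as an integer combination of them, so the problem reduces to choosing the boundary data on single- and double-row shapes so that all determined values stay non-negative. That a real solution exists is immediate, since $\tfrac13\,\C[(\Z/3\Z)^{n+1}]$ restricts to $\C[(\Z/3\Z)^n]$ and thus furnishes a non-negative rational point of the feasibility polytope; the whole difficulty is the integrality together with non-negativity. I expect the naive uniform choice $d_{(\alpha,\emptyset)}=1$ to be inadequate for large $n$ (it already produces a negative entry at $n=5$), so the boundary data must be chosen with care. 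My plan is to give an explicit formula for the boundary values, guided by rounding the rational solution $\tfrac{\alpha_1-\alpha_2+1}{3}$, and to prove by induction on $b$ that the resulting determined values remain non-negative; an alternative is a Farkas/rounding argument showing that the feasibility polytope—nonempty and, one hopes, full-dimensional with enough slack—always contains a lattice point. Verifying that some such choice works uniformly in $n$ is the crux of the proof.
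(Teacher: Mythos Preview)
Your approach is essentially the paper's: you compute the same multiplicities $c_{((\lambda_1,\lambda_2),(b))}=\lambda_1-\lambda_2+1$ (the paper gets them via Schur--Weyl duality rather than the symmetric-function generating function $H_X(t)^2H_Y(t)$, but the output is identical), and you reduce to the same branching recurrence. The one substantive difference is in how the endgame is organized. You parameterize by the boundary values $d_{(\alpha,\emptyset)}$ and plan to round the rational solution $\tfrac{\alpha_1-\alpha_2+1}{3}$ and then check non-negativity by induction on $b$. The paper instead restricts at the outset to the smaller family $\tilde X_{n,3}=\{((\lambda_1+1,\lambda_2),a):(\lambda,a)\in X_{n,3}\}$, which is in bijection with $X_{n,3}$; the resulting linear system is square and (as the paper shows) triangular in $\lambda_1-\lambda_2$, so it has a \emph{unique} solution, determined by the one-variable recursion $c_a=a-c_{a-1}-c_{a-2}$ with $c_1=c_2=1$, whose solution is $c_a=\lceil a/3\rceil\ge 0$. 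This removes both the boundary freedom and the need for a separate non-negativity induction; your anticipated rounding formula is exactly this solution, so your plan would succeed and in fact coincide with the paper's extension once carried out.

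One small correction: the claim that the vanishing equations force the support of \emph{every} non-negative solution into the family $\ell(\alpha)\le 2$, $\beta=(b)$ is not quite true. For instance $V_{(1,1,1),\emptyset}$ restricts only to $V_{(1,1),\emptyset}$, which has $c=1\ne 0$, and $V_{\emptyset,(1,1)}$ restricts only to $V_{\emptyset,(1)}$, also with $c=1$; so these can appear in alternative extensions. This does not affect the existence argument, since you only need one extension, but it means the paper's uniqueness claim is stated (correctly) only relative to $\tilde X_{n,3}$, not globally.
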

\begin{theorem}\label{theorem: n le 2}
    When $n=1$ or $n=2$, there exists an extension of $\mathbb{C}[(\mathbb{Z}/m\mathbb{Z})^n]$ to $B_{n+1}$ for all $m\ge 1.$  
\end{theorem}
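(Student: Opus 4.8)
The plan is to reduce the existence of an extension to a solvable linear system governed by the branching rule for the hyperoctahedral groups. Recall that the irreducible representations of $B_n$ are indexed by ordered pairs of partitions (bipartitions) $(\lambda,\mu)$ with $|\lambda|+|\mu|=n$, and that restriction along $B_n\subset B_{n+1}$ is given by the branching rule
$$\Res^{B_{n+1}}_{B_n}\chi^{(\lambda,\mu)}\;\cong\;\bigoplus_{\lambda^-}\chi^{(\lambda^-,\mu)}\;\oplus\;\bigoplus_{\mu^-}\chi^{(\lambda,\mu^-)},$$
where $\lambda^-$ (resp. $\mu^-$) runs over all partitions obtained by deleting a single box from $\lambda$ (resp. $\mu$). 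Writing a hypothetical extension as $V=\bigoplus_{(\lambda,\mu)}c_{(\lambda,\mu)}\,\chi^{(\lambda,\mu)}$, the sum over bipartitions of $n+1$, the branching rule turns the requirement $\Res^{B_{n+1}}_{B_n}V\cong\C[(\Z/m\Z)^n]$ into a system of linear equations in the nonnegative integer unknowns $c_{(\lambda,\mu)}$, one equation per $B_n$-irreducible, whose right-hand sides are the multiplicities in the decomposition of $\C[(\Z/m\Z)^n]$. Thus it suffices to produce a nonnegative integer solution. I would first compute those target multiplicities via the permutation character $g\mapsto\#\{x:g\cdot x=x\}$, and then solve the system.

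For $n=1$ this is immediate. The group $B_1\cong\Z/2\Z$ has only the trivial and sign characters, and $\C[\Z/m\Z]$ decomposes as $a\cdot\mathbf 1\oplus b\cdot\mathrm{sgn}$ with $a=\tfrac12\bigl(m+\#\{x:2x=0\}\bigr)\ge 0$ and $b=m-a\ge 0$. Both $B_1$-irreducibles arise as restrictions of one-dimensional $B_2$-irreducibles — for instance $\chi^{((2),\emptyset)}$ restricts to $\mathbf 1$ and $\chi^{(\emptyset,(2))}$ restricts to $\mathrm{sgn}$ — so $V=a\,\chi^{((2),\emptyset)}\oplus b\,\chi^{(\emptyset,(2))}$ works.

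The case $n=2$ is the substance of the theorem. Here $B_2$ has five irreducibles, indexed by $((2),\emptyset),((1,1),\emptyset),((1),(1)),(\emptyset,(2)),(\emptyset,(1,1))$, while $B_3$ has ten. Working out the branching incidences, each of the four ``corner'' $B_2$-irreducibles $((2),\emptyset),((1,1),\emptyset),(\emptyset,(2)),(\emptyset,(1,1))$ is the restriction of a $B_3$-irreducible hitting only it (namely $((3),\emptyset),((1,1,1),\emptyset),(\emptyset,(3)),(\emptyset,(1,1,1))$ respectively), whereas the two-dimensional $((1),(1))$ has no such pure source: every $B_3$-irreducible restricting to it — one checks these are exactly $((2),(1)),((1,1),(1)),((1),(2)),((1),(1,1))$ — also restricts to exactly one corner. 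Consequently the system admits a nonnegative integer solution if and only if the single inequality
$$m_{((1),(1))}\;\le\;m_{((2),\emptyset)}+m_{((1,1),\emptyset)}+m_{(\emptyset,(2))}+m_{(\emptyset,(1,1))}$$
holds, where $m_{(\lambda,\mu)}$ is the multiplicity of $\chi^{(\lambda,\mu)}$ in $\C[(\Z/m\Z)^2]$: one routes $m_{((1),(1))}$ units through the four mixed $B_3$-irreducibles and repairs the corners with their pure sources.

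It remains to verify this inequality, which is where the concrete computation lives. Counting fixed points on $(\Z/m\Z)^2$ over the five conjugacy classes of $B_2$ (the identity, the central involution $-I$, the order-four rotation, the pure swap, and a single sign change) yields the permutation character $(m^2,\,d^2,\,d,\,m,\,dm)$ with $d=\gcd(2,m)$, so the parity of $m$ enters only through $d$. Pairing against the character table of $B_2\cong D_4$ gives $m_{((1),(1))}=\tfrac14(m^2-d^2)$ and $m_{((2),\emptyset)}+m_{((1,1),\emptyset)}+m_{(\emptyset,(2))}+m_{(\emptyset,(1,1))}=\tfrac12(m^2+d^2)$, so the required inequality $\tfrac14(m^2-d^2)\le\tfrac12(m^2+d^2)$ holds for every $m$ with room to spare. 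The main obstacle is therefore not any single step but the bookkeeping: pinning down the branching incidences and the fixed-point counts exactly (and tracking the two parity cases through $d$); once the multiplicities are in hand, the inequality — and hence the theorem — is automatic. I expect the same reduction to control larger $n$, with the genuine difficulty migrating to whether the analogous, now multi-constraint, nonnegativity system remains solvable.
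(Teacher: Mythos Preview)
Your argument is correct. Both the $n=1$ case and the $n=2$ reduction to the single inequality
\[
m_{((1),(1))}\;\le\;m_{((2),\emptyset)}+m_{((1,1),\emptyset)}+m_{(\emptyset,(2))}+m_{(\emptyset,(1,1))}
\]
are sound: the four mixed $B_3$-irreducibles are indeed exactly the ones branching to $((1),(1))$, each hits a distinct corner, and the four corners have pure sources, so the inequality is necessary and sufficient. Your fixed-point counts and the resulting multiplicity formulas are also right.

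Your route, however, is genuinely different from the paper's. The paper fixes in advance a small candidate set $\tilde X_{n,m}$ of $B_{n+1}$-irreducibles (obtained by adding a box to the first row of $\lambda$), shows that the associated restriction matrix is unit upper triangular, inverts it, and checks that the resulting coefficient vector is nonnegative. This is carried out separately for odd and even $m$; for $n=2$ with $m$ even the $\tilde X_{n,m}$ ansatz actually fails, and the paper instead exhibits an explicit extension by hand. By contrast, you allow all $B_{n+1}$-irreducibles, analyze the full branching graph for $n=2$, and collapse the feasibility question to one inequality that you verify uniformly in $m$ via the parameter $d=\gcd(2,m)$. Your argument is cleaner and parity-agnostic; the paper's $\tilde X_{n,m}$ machinery is less uniform here but is the same device that drives their $m=3$ result for all $n$, so it is built with an eye toward that other axis of generalization. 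Even at $n=1$ the constructions differ: you use $\chi^{((2),\emptyset)}$ and $\chi^{(\emptyset,(2))}$ as pure sources for $\mathbf 1$ and $\mathrm{sgn}$, whereas the paper's extension lives in $\tilde X_{1,m}=\{((2),\emptyset),((1),(1))\}$.
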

Theorems \ref{theorem: m=3} and \ref{theorem: n le 2} are proved in §\ref{section: results}. 
We also used integer linear programming to check Conjecture \ref{conjecture: main} for $n\le 7$ and $ m\le 21$; see §\ref{section: data} for more details.

\subsection*{Acknowledgments}

This paper is based on research conducted at the 2025 MathILy-EST REU. 
The authors thank Joshua Mundinger, the advisor of this project, and Mathematical Staircase, Inc.\ for organizing the program. 
The authors thank Nate Harman for useful comments and conversations.

This material is based upon work supported by the National Science Foundation under Grant Number DMS-2149647. Any opinions, findings, and conclusions or recommendations expressed in this material are those of the authors and do not necessarily reflect the views of the National Science Foundation.

\section{Background}

\subsection{Representations of finite groups}

We must first review some background content that we will reference in our work. 
The following standard facts can be found in sections 1, 2, and 7 of \textit{Linear Representations of Finite Groups} by Serre \cite{Serre77}. 

\begin{definition}
    A (complex) \textit{representation} $\rho$ of a group $G$ on a vector space $V$ over $\C$ is a homomorphism $\rho: G \rightarrow GL(V)$, where $GL(V)$ is the general linear group of $V$, the group of invertible linear transformations on $V$. 
\end{definition}

We can think of representations as a group action by linear transformations on a vector space, namely, for some group $G$ and vector space $V$, $g \cdot v = \rho (g) (v), g \in G, v \in V$. As shorthand, we will refer to a representation by the vector space on which the group acts. Recall that a representation $\rho$ on a vector space $V$ is \textit{irreducible} if there does not exist a nontrivial proper subspace $W \subset V$ which is invariant under the image of $\rho$. The number of complex irreducible representations of a finite group up to isomorphism is equal to the number of conjugacy classes \cite[Theorem 2.7]{Serre77}. 

Every representation has a unique function which associates to each group element the trace of its corresponding linear transformation. 
\begin{definition}
    The \textit{character} $\chi$ of a representation is the function $\chi: G \to \mathbb{C}$ given by $\chi (g) = \trace(\rho(g)) $. 
\end{definition}

Characters are constant over the conjugacy classes of $G$. This relies on the fact that for any linear transformations $A: V \rightarrow V$ and $B: V \rightarrow V$, $\trace(AB)=\trace(BA)$. 

We can then compute the \textit{character table} of $G,$ a matrix with rows indexed by irreducible representations of $G$ and columns indexed by conjugacy classes of $G.$ The entry corresponding to $V$ and the conjugacy class of $g \in G$ is given by $\chi_V(g). $  

Given two representations $V$ and $W$ of $G,$ we define a direct sum representation $V\oplus W$ by letting $G$ act independently on each factor. It turns out that \textit{every } representation of $G$ is the direct sum of irreducible $G-$representations. We can compute their multiplicities by way of the useful identity $\chi_{V\oplus W}= \chi_V+\chi_W. $

We have a number of  ways to relate representations of groups to representations of their subgroups. 

\begin{definition}
    Let $G$ be a group with a subgroup $H \subseteq G$, and let $V$ be a representation of $G$. The \textit{restriction} $\Res^G_H V$ from $G$ to $H$ is the representation of $H$ on $V$ defined only on elements $h \in H$.
\end{definition}

\begin{definition}
    Let $G$ be a group with a subgroup $H \subseteq G$, and let $V$ be a representation of $H$. Let $g_1, ..., g_k$ be coset representatives for $G/H$. The \textit{induced representation} from $H$ to $G$ is the vector space $\Ind_H^G V =  \bigoplus_{i=1}^{k} g_iV$, the direct sum of copies of V labeled by $g_i$. To define the action of $G$ on this space, we declare that if $g g_i = g_j h$ for some $h \in H$, then $g (g_iv) = g_j(hv) \in g_j V$, the copy of V labeled by $g_j$. 
\end{definition}

In this paper, we are specifically concerned with a type of representation called the \textit{linearization} of a $G-$set. 

\begin{definition}\label{linearization}
    For any group $G$ acting on a set $X,$ we define $\C[X]$  to be a vector space with basis elements labeled by elements of $X$. Equivalently, $\C[X]$ is the complex vector space containing formal complex linear combinations of elements in $X$. The action of $G$ on $X$ defines a $G-$representation on $\C[X]$.
\end{definition}

The specific sets we are concerned with will be related to the action of the symmetric group on tensor powers:

\begin{definition}
    Suppose $V$ is a complex vector space. The $n^{\text{th}}$ \textit{tensor power} of $V$ has underlying vector space $V^{\otimes n}$ with an action of the $n$th symmetric group $S_n$ given by permuting the tensor factors. Explicitly, $\sigma\in S_n$ acts on pure tensors by 
    $$\sigma\cdot (v_1\otimes\dots\otimes v_n)= v_{\sigma^{-1}(1)}\otimes \cdots v_{\sigma^{-1}(n)}.$$
 \end{definition}

\subsection{Representation Theory of the Symmetric Group }

The irreducible representations of the symmetric group have a particularly useful classification, which we will use to find decompositions of $B_n$-representations.
Refer to Chapter 1 of \textit{Representation Theory of the Symmetric Group} \cite{jameskerber84} for further background on the Symmetric Group.

\begin{definition}
A \textit{partition} $\lambda$ of a positive integer $n$ is a sequence of weakly decreasing nonnegative 
integers $(\lambda_1 \geq ... \geq  \lambda_k)$ that sum to $n$.
\end{definition}
We will write $\lambda \vdash n$ to indicate that $\lambda$ is a partition of $n$ and define $|\lambda| = \sum_i \lambda_i$. We will also adopt the convention that $\emptyset \vdash 0$ is the only partition of 0.

We can use partitions to classify irreducible representations of the symmetric group. Recall that every element of the symmetric group can be decomposed into a unique product of disjoint cycles. 

\begin{definition}
    Suppose $\sigma \in S_n$ decomposes into the product of disjoint cycles $c_1, c_2,...,c_k$ with orders $a_1,\dots, a_k.$ The \textit{cycle type} of $\sigma$ is the partition $\lambda \vdash n$ equal to the weakly decreasing reordering $\lambda = (a_1' \geq a_2' \dots \geq a_k')$. 
\end{definition}

There exists a bijection between irreducible representations of $S_n$ and partitions $\lambda\vdash n.$ We will write the irreducible representation of $S_n$ corresponding to the partition $\lambda$ as $L_{\lambda}$, as in \cite[35-36]{jameskerber84}.

Remarkably, the following theorem characterizes all tensor spaces of the symmetric group in terms of these partitions.

\begin{theorem}[Schur-Weyl Duality, \cite{jameskerber84}, 4.3.2] Let $V$ be a $\mathbb{C}$-vector space with $d := \dim V < \infty$. Then there is an isomorphism of $S_n$-representations  $$V^{\otimes n} \cong \bigoplus_{\lambda \vdash n:\, \ell(\lambda) \leq d} \mathbb{S}^\lambda V \boxtimes L_\lambda
  $$
where $\mathbb{S}^{\lambda}V$ is the subspace of $V^{\otimes n}$ obtained by applying the Young symmetrizer corresponding to $\lambda$.
Thus, as an $S_n$-module,
\[
V^{\otimes n}\ \cong\ \bigoplus_{\substack{\lambda\vdash n\\ \ell(\lambda)\le d}}
(\dim \mathbb{S}^\lambda V)\,L_\lambda,
\]
 
\end{theorem}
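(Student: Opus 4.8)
The plan is to read off the multiplicities directly from semisimplicity together with the defining property of the Young symmetrizers, and then to determine separately which partitions actually contribute. First I would apply Maschke's theorem: since $\C[S_n]$ is semisimple, $V^{\otimes n}$ splits into its $L_\lambda$-isotypic components, giving $V^{\otimes n}\cong\bigoplus_{\lambda\vdash n} m_\lambda L_\lambda$ with $m_\lambda=\dim\mathrm{Hom}_{S_n}(L_\lambda,V^{\otimes n})$. It then suffices to identify each multiplicity $m_\lambda$ with $\dim\mathbb{S}^\lambda V$ and to characterize when this dimension is nonzero.

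The central step is the identity $m_\lambda=\dim\mathbb{S}^\lambda V$. Here I would use two standard properties of the Young symmetrizer $c_\lambda\in\C[S_n]$: that some nonzero scalar multiple $e_\lambda=\alpha_\lambda^{-1}c_\lambda$ is idempotent, and that the left ideal $\C[S_n]e_\lambda$ is isomorphic to $L_\lambda$. For any $\C[S_n]$-module $W$ and any idempotent $e$, the evaluation map $f\mapsto f(e)$ gives a natural isomorphism $\mathrm{Hom}_{S_n}(\C[S_n]e,W)\cong eW$. Taking $W=V^{\otimes n}$ and $e=e_\lambda$ yields $m_\lambda=\dim(e_\lambda V^{\otimes n})=\dim(c_\lambda V^{\otimes n})=\dim\mathbb{S}^\lambda V$, the last equality being the definition of $\mathbb{S}^\lambda V$ as the image of $c_\lambda$. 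This proves the $S_n$-module decomposition; the $\boxtimes$-refinement then follows by recording the commuting diagonal $GL(V)$-action on each multiplicity space $\mathbb{S}^\lambda V$.

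It remains to show $\mathbb{S}^\lambda V\neq 0$ if and only if $\ell(\lambda)\le d$. The symmetrizer $c_\lambda$ contains as a factor the antisymmetrizer over the tensor positions in each column of $\lambda$. The longest column has length $\ell(\lambda)$, so when $\ell(\lambda)>d$ this factor sends $V^{\otimes n}$ into $\Lambda^{\ell(\lambda)}V=0$ by the pigeonhole principle, forcing $\mathbb{S}^\lambda V=0$. For the converse I would exhibit an explicit tensor that survives $c_\lambda$ when $\ell(\lambda)\le d$; equivalently, one can recall that $\dim\mathbb{S}^\lambda V$ equals the number of semistandard Young tableaux of shape $\lambda$ with entries in $\{1,\dots,d\}$, which is positive precisely when $\lambda$ has at most $d$ rows.

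I expect the main obstacle to be the central identification $m_\lambda=\dim\mathbb{S}^\lambda V$, since it rests on the full package of Young-symmetrizer facts — that $c_\lambda$ is essentially idempotent and that $\C[S_n]c_\lambda\cong L_\lambda$ — each of which requires real combinatorial and algebraic input to establish from scratch. An alternative route is the \emph{double-centralizer} approach: show that the images of $\C[S_n]$ and of the diagonal $GL(V)$-action in $\mathrm{End}(V^{\otimes n})$ are mutual commutants, the nontrivial direction being a polarization argument that the $S_n$-invariants of $\mathrm{End}(V)^{\otimes n}$ are spanned by the operators $T^{\otimes n}$, and then invoke the structure theorem for a module over a pair of mutually centralizing semisimple algebras.
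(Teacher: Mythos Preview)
The paper does not prove this statement at all: Schur--Weyl duality is quoted as a background result from \cite{jameskerber84}, and the text immediately following it says that ``the details of the construction of $\mathbb{S}^{\lambda}V$ are outside of this paper.'' So there is no proof in the paper to compare your proposal against.

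Taken on its own merits, your argument is a correct outline of one standard proof. The key identification $\mathrm{Hom}_{S_n}(\C[S_n]e,W)\cong eW$ for an idempotent $e$, combined with $\C[S_n]c_\lambda\cong L_\lambda$, cleanly gives $m_\lambda=\dim\mathbb{S}^\lambda V$; the vanishing when $\ell(\lambda)>d$ via the column antisymmetrizer is also right. The only point I would flag is the converse: invoking the semistandard-tableau count for $\dim\mathbb{S}^\lambda V$ is logically fine but imports a fact of comparable depth to what you are proving. A self-contained alternative is to apply $c_\lambda$ directly to the tensor $e_{i_1}\otimes\cdots\otimes e_{i_n}$ obtained from the ``superstandard'' filling (entry $i$ in every box of row $i$) and observe that the row symmetrizer fixes it while the column antisymmetrizers produce a nonzero wedge in each column, so the image is nonzero whenever $\ell(\lambda)\le d$.
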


The details of the construction of $\mathbb{S}^{\lambda}V$ are outside of this paper, but it turns out that we can explicitly calculate the coefficients $\dim(\mathbb{S}^{\lambda}V)$ of $L_{\lambda}$ with the following result.  

\begin{theorem}[The Weyl Dimension Formula, \cite{FultonHarris}, Theorem 6.3]\label{weyldimformula}

Let $k=\dim V.$ Then \( \mathbb{S}^{\lambda}V \) is zero if \( \lambda_{k+1} \neq 0 \). Otherwise, we have the formula
\[
\dim \mathbb{S}^{\lambda}V = \prod_{1 \leq i < j \leq k} \left(\frac{\lambda_i - \lambda_j + j - i}{j - i}\right).
\]

\end{theorem}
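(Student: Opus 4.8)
The plan is to prove both assertions through the classical identification of $\dim \mathbb{S}^\lambda V$ with a principal specialization of a Schur polynomial. Write $k = \dim V$. The Schur functor $\mathbb{S}^\lambda V$ is the irreducible polynomial representation of $GL(V)$ whose character, evaluated on a diagonalizable element with eigenvalues $x_1,\dots,x_k$, is the Schur polynomial $s_\lambda(x_1,\dots,x_k)$; equivalently, $\dim \mathbb{S}^\lambda V$ counts the semistandard Young tableaux of shape $\lambda$ with entries in $\{1,\dots,k\}$, which is the value $s_\lambda(1,\dots,1)$. First I would recall this description, since both statements of the theorem fall out of it.

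For the vanishing claim, I would use that the entries down each column of a semistandard tableau strictly increase. If $\lambda_{k+1}\neq 0$, then $\lambda$ has at least $k+1$ rows, so its first column would need at least $k+1$ strictly increasing entries from the $k$-element set $\{1,\dots,k\}$, which is impossible; hence no such tableaux exist and $\dim \mathbb{S}^\lambda V = 0$.

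For the dimension formula, the key step is to evaluate $s_\lambda(1,\dots,1)$ via the bialternant (Weyl character) formula
\[
s_\lambda(x_1,\dots,x_k) = \frac{\det\big(x_i^{\lambda_j + k - j}\big)_{1\le i,j\le k}}{\det\big(x_i^{k-j}\big)_{1\le i,j\le k}},
\]
in which the denominator is the Vandermonde determinant. Setting all $x_i=1$ gives the indeterminate form $0/0$, so instead I would take the principal specialization $x_i = q^{\,i-1}$ and let $q\to 1$. With $a_j = \lambda_j + k - j$, each determinant becomes a Vandermonde determinant in the variables $q^{a_j}$ (respectively $q^{k-j}$), so the quotient factors as $\prod_{i<j}\frac{q^{a_j}-q^{a_i}}{q^{k-j}-q^{k-i}}$. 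Applying the elementary limit $\lim_{q\to 1}\frac{q^p-q^r}{q^s-q^t} = \frac{p-r}{s-t}$ factorwise and using $a_j - a_i = (\lambda_j-\lambda_i)+(i-j)$ then yields precisely $\prod_{1\le i<j\le k}\frac{\lambda_i-\lambda_j+j-i}{j-i}$.

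The conceptual ingredients here are entirely standard, so the only real work is bookkeeping: one must justify the identification $\dim \mathbb{S}^\lambda V = s_\lambda(1,\dots,1)$, which rests on the $GL(V)$-representation theory underlying the Schur functor appearing in the preceding Schur-Weyl statement, and one must track index ranges and signs carefully when factoring the two Vandermonde determinants and passing to the limit $q\to 1$. Because this is a well-known result, I would in the end defer to \cite{FultonHarris} for the complete argument rather than reproduce it in full.
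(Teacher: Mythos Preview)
The paper does not prove this theorem: it is stated as background with a citation to \cite{FultonHarris} and no argument is given. Your sketch via the principal specialization $x_i=q^{i-1}$ of the bialternant formula and the Vandermonde factorization is the standard correct argument, and your concluding move of deferring to \cite{FultonHarris} is exactly what the paper itself does.
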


These results give an explicit expression for the multiplicities of irreducible representations of $S_n$ in tensor space.

\begin{corollary}\label{finalschurweyl}
Let $k=\dim V$. Then $V^{\otimes n}$ has the following decomposition into irreducible representations of $S_n$:

\[	V^{\otimes n} \cong \bigoplus_{\lambda\vdash n} d(\lambda,k)L_\lambda,\]
where 
$$d(\lambda,a)=\begin{cases} \frac{\prod_{i < j \leq a} (\lambda_i - \lambda_j+j-i)}{\prod_{i<j \leq a}{(j-i)}} & \text{rows}(\lambda)\le a \\
    0 & \text{rows}(\lambda)>a.
    \end{cases}$$

\end{corollary}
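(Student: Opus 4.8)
The plan is to combine the two preceding theorems, since the corollary is essentially a transcription of Schur--Weyl Duality with the multiplicities made explicit via the Weyl Dimension Formula. First I would invoke Schur--Weyl Duality with $d = k = \dim V$ to obtain the decomposition of $V^{\otimes n}$ as an $S_n$-module, namely $\bigoplus_{\lambda \vdash n,\, \ell(\lambda) \le k} (\dim \mathbb{S}^\lambda V)\, L_\lambda$. This reduces the problem to identifying each multiplicity $\dim \mathbb{S}^\lambda V$ with the claimed quantity $d(\lambda, k)$ and to accounting for the partitions omitted from the sum.

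Next I would extend the range of the sum to all $\lambda \vdash n$ rather than only those with $\ell(\lambda) \le k$. This is justified by the vanishing clause of the Weyl Dimension Formula: if $\mathrm{rows}(\lambda) > k$, equivalently $\lambda_{k+1} \neq 0$, then $\mathbb{S}^\lambda V = 0$ and the corresponding summand drops out. This matches precisely the second branch of the definition of $d(\lambda, k)$, where the coefficient is set to $0$, so no new terms are introduced.

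For the remaining partitions, those with $\mathrm{rows}(\lambda) \le k$, I would apply the Weyl Dimension Formula directly to rewrite $\dim \mathbb{S}^\lambda V = \prod_{1 \le i < j \le k} \frac{\lambda_i - \lambda_j + j - i}{j - i}$, which is exactly the first branch of $d(\lambda, k)$. Here one must adopt the convention that $\lambda$ is padded with zeros, so that $\lambda_i = 0$ for $\mathrm{rows}(\lambda) < i \le k$, ensuring the product over all $1 \le i < j \le k$ is well-defined even when $\lambda$ has fewer than $k$ parts.

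The main (and essentially only) obstacle is bookkeeping: matching the index conventions between the product over $1 \le i < j \le k$ in the Weyl Dimension Formula and the product appearing in the definition of $d(\lambda, a)$, and verifying that the two case conditions ``$\lambda_{k+1} \neq 0$'' and ``$\mathrm{rows}(\lambda) > a$'' cut out the same set of partitions. Since no representation-theoretic input beyond the two cited theorems is required, the argument is a short unwinding of definitions.
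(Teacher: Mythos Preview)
Your proposal is correct and matches the paper's approach: the corollary is stated immediately after Schur--Weyl Duality and the Weyl Dimension Formula with no separate proof, the text simply noting that ``these results give an explicit expression for the multiplicities,'' which is exactly the combination you carry out. Your handling of the padding convention and the equivalence of $\lambda_{k+1}\neq 0$ with $\mathrm{rows}(\lambda)>k$ fills in the only details the paper leaves implicit.
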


\subsection{The Signed Symmetric Group}

\begin{definition}
     The \textit{signed symmetric group} on $n$ letters, notated $B_n$, is the group of all $n \times n $ matrices with exactly one nonzero entry in each row and column that is either 1 or -1. 
\end{definition}

Equivalently, $B_n$ can be thought of as the wreath product of $S_2$ (negation) and $S_n$ (permutation). $B_n$ is also known as the hyperoctahedral group, as it is the symmetry group of a hypercube in $n$ dimensions. 

There exists a homomorphism $\pi : B_n \rightarrow S_n$ which sends every nonzero entry of a matrix in $B_n$ to 1 and considers the resulting permutation matrix in $S_n$. We define $\beta \in B_n$ to be a cycle in $B_n$ if and only if $\pi(\beta)$ is a cycle in $S_n$. Similar to the symmetric group, every element of the signed symmetric group can be decomposed uniquely into a product of disjoint cycles.

\begin{definition}
    A cycle $\beta \in B_n$ is \textit{even} if the number of -1's in its corresponding matrix is even. Otherwise, the cycle is \textit{odd}.
\end{definition}

Separating elements of $B_n$ into products of odd and even cycles leads to a clean characterization of the conjugacy classes of $B_n$. 

\begin{definition}
A \textit{bipartition} of a positive integer $n$ is an ordered pair of partitions $\lambda, \mu$ such that $|\lambda| + |\mu| = n$. 

\end{definition}
We will write $(\lambda, \mu) \vDash n$ to indicate that $(\lambda, \mu)$ is a bipartition of $n$.

\begin{definition}

    Consider $\beta =  \beta_+ \beta_- \in B_n$ where $\beta_+$ decomposes into the disjoint product of even cycles and $\beta_-$ decomposes into the disjoint product of odd cycles. We define the \textit{signed cycle type} of $\beta$ to be the bipartition $(\lambda, \mu) \vDash n$, where $\lambda$ is the cycle type of $\pi(\beta_+)$ and $\mu$ is the cycle type of $\pi(\beta_-)$.

\end{definition}

\begin{example}
    We can correspond $$\begin{pmatrix}
    0&1&0&0&0\\
    1&0&0&0&0\\
    0&0&0&-1&0\\
    0&0&1&0&0\\
    0&0&0&0&1\\

    \end{pmatrix}\in B_5$$ to the  permutation $(1,2),(3,4,-3,-4)\in B_5$ with signed cycle type $((1),(1)).$
\end{example} 

\begin{proposition}
 [\cite{jameskerber84}, Theorem 4.2.8] Suppose $\beta_1,\beta_2\in B_n.$ Then $\beta_1$ and $\beta_2$ are conjugate if and only if they have the same signed cycle type. 
\end{proposition}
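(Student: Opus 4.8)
The plan is to mimic the classical proof that conjugacy classes in $S_n$ are indexed by cycle type, while carefully tracking the extra parity data that distinguishes type $B$. First I would translate the statement into the natural action of $B_n$ on the $2n$-element set $\{\pm 1, \dots, \pm n\}$: an element $\beta \in B_n$ is a permutation of this set satisfying $\beta(-i) = -\beta(i)$, and conjugation in $B_n$ is conjugation by such a negation-commuting permutation. The crucial dictionary is that the signed cycle type is exactly the orbit structure of $\beta$ on $\{\pm 1, \dots, \pm n\}$ together with the negation pairing: an even $k$-cycle contributes a pair of size-$k$ orbits interchanged by negation, while an odd $k$-cycle contributes a single size-$2k$ orbit that is setwise fixed by negation (since traversing it once applies an odd number of sign changes and returns $i$ to $-i$). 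Establishing this dictionary reduces everything to a statement about orbits.

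For the forward direction, suppose $\beta_2 = g\beta_1 g^{-1}$. Conjugation carries the $\beta_1$-orbit of $v$ bijectively onto the $\beta_2$-orbit of $g(v)$, so it preserves orbit sizes; and because $g$ commutes with negation it sends negation-paired orbits to negation-paired orbits and self-paired orbits to self-paired orbits. By the dictionary this is exactly the assertion that $\beta_1$ and $\beta_2$ have the same signed cycle type. The one genuinely type-$B$ point is that the even/odd distinction is conjugation invariant; I would verify this directly by noting that the parity of a cycle is its cyclic sign product (the product of the nonzero matrix entries around the cycle), and that conjugating by a diagonal sign matrix $\mathrm{diag}(\epsilon_1, \dots, \epsilon_n)$ multiplies this product by $\prod_i \epsilon_i \epsilon_{\sigma(i)}^{-1} = 1$, where $\sigma$ is the underlying cyclic permutation of the indices, while conjugating by a permutation matrix merely relabels. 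As these generate $B_n$, parity is preserved.

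For the converse, given $\beta_1, \beta_2$ with equal signed cycle type, I would build the conjugating element $g$ one matched cycle at a time. Pairing each cycle of $\beta_1$ with a cycle of $\beta_2$ of the same length and parity, I define $g$ on the corresponding orbits. For an odd cycle, write the size-$2k$ orbit of $\beta_1$ as $v_0 \mapsto v_1 \mapsto \cdots \mapsto v_{2k-1} \mapsto v_0$ with $v_{k+i} = -v_i$, and similarly $w_0, \dots, w_{2k-1}$ for $\beta_2$; setting $g(v_i) = w_i$ gives $g(-v_i) = g(v_{k+i}) = w_{k+i} = -w_i = -g(v_i)$, so $g$ commutes with negation and conjugates $\beta_1$ to $\beta_2$ on this orbit. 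For an even cycle, write the paired orbits as $v_0, \dots, v_{k-1}$ and their negatives, and set $g(v_i) = w_i$ and $g(-v_i) = -w_i$. Performing this for every matched pair of cycles assembles a global $g \in B_n$ with $g\beta_1 g^{-1} = \beta_2$.

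I expect the main obstacle to be bookkeeping rather than conceptual: one must check that the piecewise definition of $g$ assembles into a single well-defined signed permutation that commutes with negation globally (no two clauses conflict, and every element of $\{\pm 1, \dots, \pm n\}$ is assigned exactly once), and that the forward-direction parity invariance is stated cleanly enough to cover both generators of $B_n$. The conceptual content beyond the $S_n$ case is entirely in the parity invariant and the negation-compatibility of the constructed $g$.
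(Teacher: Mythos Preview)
The paper does not supply its own proof of this proposition; it is quoted as a background fact with a citation to James--Kerber, Theorem 4.2.8. Your plan is correct and is essentially the standard argument found there: the orbit dictionary on $\{\pm 1,\dots,\pm n\}$ is exactly the right conjugation invariant (and already encodes the even/odd parity, so your separate sign-product check, while valid, is redundant once the dictionary is in place), and the cycle-by-cycle construction of $g$ is the usual way to realize the converse.
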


Parallel to the symmetric group case, there is an explicit bijection between irreducible representations of $B_n$ and bipartitions $(\lambda,\mu)\vDash n.$ Additionally, these irreducible representations of $B_n$ can be built from the irreducible representations of $S_n. $

Let $\epsilon$ be the \textit{signed character} of $B_n$, the function that assigns to a signed permutation matrix $\beta \in B_n$ the product of its nonzero entries. Now, let $(\lambda, \mu) \vDash n$, with $|\lambda| = l$ and $|\mu| = n-l$, and let $L_{\lambda}, L_{\mu}$ be the irreducible representations of $S_l$ and $S_{n-l}$ corresponding to the partitions $\lambda, \mu$, respectively. We define 
\begin{definition}\label{V_{lambda,mu}}
$$V_{\lambda, \mu} = \Ind_{B_{l} \times B_{n-l}}^{B_n} L_{\lambda} \boxtimes (L_{\mu} \otimes \epsilon). $$
\end{definition}

It turns out that each such $V_{\lambda, \mu}$ is an irreducible representation of $B_n$, and we can build all irreducible representations of $B_n$ this way. 

\begin{theorem}[\cite{jameskerber84}, 4.3.34] \label{specificSchurWeyl}
 The map $(\lambda,\mu)\to V_{\lambda,\mu}$ is a bijection between bipartitions of $n$ and irreducible representations of $B_n$ up to isomorphism. 
\end{theorem}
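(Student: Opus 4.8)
The plan is to prove this via the little-group (Wigner--Mackey) method for semidirect products with abelian kernel, as developed in \cite[§8.2]{Serre77}. The starting point is that $B_n$ is the semidirect product $A \rtimes S_n$, where $A = (\Z/2\Z)^n$ is the abelian normal subgroup of diagonal sign matrices and $S_n \subseteq B_n$ is the subgroup of permutation matrices, acting on $A$ by permuting coordinates. A useful simplification is that the two sets in question already have the same finite cardinality: by the cited Proposition (\cite[Theorem 4.2.8]{jameskerber84}) the conjugacy classes of $B_n$ are indexed by signed cycle types, that is, by bipartitions of $n$, and the number of irreducible representations of a finite group equals the number of conjugacy classes \cite[Theorem 2.7]{Serre77}. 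Hence it suffices to show that each $V_{\lambda,\mu}$ is irreducible and that distinct bipartitions yield non-isomorphic representations, since an injection between finite sets of equal size is automatically a bijection.

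First I would analyze the characters of $A$. The dual group $\hat{A}$ consists of the characters $\psi_T$, $T \subseteq \{1,\dots,n\}$, defined by $\psi_T(\varepsilon_1,\dots,\varepsilon_n) = \prod_{i\in T}\varepsilon_i$, and conjugation by $S_n$ sends $\psi_T$ to $\psi_{\sigma(T)}$. Thus the $S_n$-orbits on $\hat{A}$ are indexed by the size $k = |T|$, with orbit representative $\psi_{T_k}$ for $T_k = \{l+1,\dots,n\}$ where $k = n-l$, and stabilizer $(S_n)_{\psi_{T_k}} = S_l \times S_{n-l}$. Consequently the subgroup $A \rtimes (S_n)_{\psi_{T_k}}$ is exactly the block-diagonal subgroup $B_l \times B_{n-l}$. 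I would then match this data with the definition of $V_{\lambda,\mu}$: the character $\psi_{T_k}$ extends to $A \rtimes (S_n)_{\psi_{T_k}}$ by declaring the $S_l \times S_{n-l}$ part to act trivially, giving an extension $\tilde\psi$, and twisting the pullback of an irreducible $L_\lambda \boxtimes L_\mu$ of $S_l \times S_{n-l}$ by $\tilde\psi$ produces a representation of $B_l \times B_{n-l}$ on which $A$ acts by product-of-signs on the last $n-l$ coordinates and trivially on the first $l$. This is precisely $L_\lambda \boxtimes (L_\mu \otimes \epsilon)$, since $\epsilon$ restricted to $B_{n-l}$ acts as the product of signs on $(\Z/2\Z)^{n-l}$ and trivially on permutation matrices.

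The little-group theorem then asserts that inducing this representation to $B_n$—which is exactly $V_{\lambda,\mu}$—produces an irreducible representation, that distinct pairs (orbit, stabilizer-irreducible) give non-isomorphic results, and that every irreducible of $B_n$ arises this way. As $k$ ranges over $0,\dots,n$ and $(\lambda,\mu)$ over partitions with $\lambda\vdash l$, $\mu\vdash n-l$, the pairs $(\lambda,\mu)$ range over all bipartitions of $n$, which gives the bijection.

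The main obstacle is the content feeding the little-group theorem, namely the Mackey double-coset computation establishing irreducibility and distinctness; equivalently, verifying its hypotheses in our setting. Checking that $\psi_{T_k}$ extends to its stabilizer subgroup is routine (the formula $\tilde\psi(a h) = \psi_{T_k}(a)$ for $h$ in the stabilizer is a homomorphism precisely because the stabilizer fixes $\psi_{T_k}$), and the character identification above is a direct computation. Distinctness across different orbit sizes is immediate, since restricting $V_{\lambda,\mu}$ to $A$ records which coordinates carry the sign character and hence recovers $l = |\lambda|$; within a fixed $l$, distinctness reduces to the fact that the $L_\lambda \boxtimes L_\mu$ are pairwise non-isomorphic irreducibles of $S_l \times S_{n-l}$, which follows from the classification of irreducibles of symmetric groups. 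Thus the only genuinely substantive step is the irreducibility of the induced representation, for which I would either invoke the general little-group theorem directly or, for a self-contained argument, verify Mackey's irreducibility criterion by computing $\langle \chi_{V_{\lambda,\mu}}, \chi_{V_{\lambda,\mu}}\rangle = 1$ through the double cosets of $B_l \times B_{n-l}$ in $B_n$.
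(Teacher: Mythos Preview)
Your argument is correct. The paper does not actually give its own proof of this theorem; it merely cites \cite[4.3.34]{jameskerber84} as background. Your approach via the Wigner--Mackey little-group method for semidirect products with abelian kernel (\cite[§8.2]{Serre77}) is the standard route to this result, and is essentially what James--Kerber do in the general wreath-product setting $G \wr S_n$, specialized here to $G = S_2$. The key identification---that the paper's $L_\lambda \boxtimes (L_\mu \otimes \epsilon)$ on $B_l \times B_{n-l}$ coincides with the twist of the inflated $S_l \times S_{n-l}$-irreducible $L_\lambda \boxtimes L_\mu$ by the extended character $\tilde\psi_{T_k}$---is carried out correctly, using that the paper's $\epsilon$ is trivial on permutation matrices and equals the full sign product on the diagonal subgroup. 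Your counting shortcut (bipartitions index conjugacy classes, hence the two sets have equal size, so injectivity suffices) is a nice efficiency, though as you note the little-group theorem already delivers surjectivity directly.
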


The final step in our characterization of $B_n$-representations is to consider the behavior of our irreducible representations under restriction.

\begin{theorem}[The Branching Rule for type $B$, \cite{Zelevinsky1981}, 7.6] 

Let $(\lambda,\mu)\vDash n$ and let $R_{\lambda, \mu}$ be the set of bipartitions obtained by removing one box from the Young diagram of $(\lambda, \mu)$. Then we have 
$$\mathrm{Res}_{B_{n-1}}^{B_n}  V_{\lambda, \mu}  = 
\bigoplus_{(\lambda', \mu')  \ \in \ R_{\lambda, \mu}} V_{\lambda', \mu'} $$ 
\end{theorem}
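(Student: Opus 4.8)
The plan is to prove the branching rule directly from the defining formula $V_{\lambda,\mu} = \Ind_{B_l \times B_{n-l}}^{B_n}\!\big(L_\lambda \boxtimes (L_\mu \otimes \epsilon)\big)$, by combining the Mackey restriction--induction formula with the classical branching rule for the symmetric group. Writing $W = L_\lambda \boxtimes (L_\mu \otimes \epsilon)$, I would compute $\Res^{B_n}_{B_{n-1}} \Ind_{B_l \times B_{n-l}}^{B_n} W$ via Mackey's theorem, which expresses it as a direct sum, indexed by the double cosets $B_{n-1}\backslash B_n /(B_l\times B_{n-l})$, of representations $\Ind_{B_{n-1}\cap\, g(B_l\times B_{n-l})g^{-1}}^{B_{n-1}}$ of the restrictions of the $g$-conjugates of $W$.

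The first key step is to enumerate these double cosets. Realizing $B_n$ as the group of signed permutations of $\{\pm 1,\dots,\pm n\}$, the coset space $B_n/(B_l\times B_{n-l})$ is identified with the $\binom{n}{l}$ choices of which $l$ coordinate indices form the first block; this count is confirmed by $|B_n|/|B_l\times B_{n-l}| = 2^n n!/(2^n l!(n-l)!) = \binom{n}{l}$. The subgroup $B_{n-1}$ fixes the index $n$ and permutes the remaining $n-1$ indices freely (with signs), so its orbits on these $l$-subsets are governed solely by whether $n$ lies in the chosen subset. This yields exactly two double cosets when $0<l<n$: one in which the index $n$ is assigned to the first (``$\lambda$'') block, and one in which it is assigned to the second (``$\mu$'') block.

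Next, for each double coset I would identify the intersection subgroup and the twisted representation. In the first case the intersection is $B_{l-1}\times B_{n-l}$, with $B_{l-1}$ acting on the $l-1$ indices of the first block lying in $[n-1]$; since the $B_l$-action on $L_\lambda$ factors through $\pi\colon B_l \to S_l$, restricting inflates the type $A$ branching rule $\Res^{S_l}_{S_{l-1}} L_\lambda \cong \bigoplus_{\lambda'} L_{\lambda'}$, the sum running over the removable boxes of $\lambda$ (see \cite{jameskerber84}). Re-inducing and recognizing each summand through the definition of $V_{\lambda,\mu}$ produces $\bigoplus_{\lambda'} V_{\lambda',\mu}$. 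The second double coset is handled symmetrically: the intersection is $B_l\times B_{n-l-1}$, the sign character $\epsilon$ of $B_{n-l}$ restricts to the sign character of $B_{n-l-1}$, and the type $A$ branching of $L_\mu$ yields $\bigoplus_{\mu'} V_{\lambda,\mu'}$ over the removable boxes of $\mu$. Summing the two contributions gives exactly $\bigoplus_{(\lambda',\mu')\in R_{\lambda,\mu}} V_{\lambda',\mu'}$; the degenerate cases $l=0$ and $l=n$ collapse to a single double coset, consistent with removing boxes only from $\mu$ or only from $\lambda$.

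The main obstacle will be the careful bookkeeping in the Mackey computation: selecting explicit double-coset representatives $g$, verifying that $g(B_l\times B_{n-l})g^{-1}$ meets $B_{n-1}$ in precisely the claimed parabolic $B_{l-1}\times B_{n-l}$ (respectively $B_l\times B_{n-l-1}$), and confirming that conjugation fixes the inflated factors $L_\lambda, L_\mu$ and the sign twist $\epsilon$ up to the intended relabeling of indices. Once these identifications are pinned down, the symmetric-group branching rule supplies all the combinatorial content and the type $B$ statement follows.
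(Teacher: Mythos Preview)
The paper does not prove this theorem at all: it is stated with a citation to \cite{Zelevinsky1981}, §7.6, and used as a black box. So there is no ``paper's own proof'' to compare against.

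That said, your proposed argument is correct and is one of the standard ways to establish the type $B$ branching rule. Inducing from the Young subgroup $B_l\times B_{n-l}$ and applying Mackey with respect to $B_{n-1}\subset B_n$ is exactly the right move; the identification of the two double cosets (for $0<l<n$) via the position of the index $n$ is accurate, the intersection subgroups are indeed $B_l\times B_{n-l-1}$ and $B_{l-1}\times B_{n-l}$, and since $L_\lambda$, $L_\mu$ are inflated along $\pi$ while $\epsilon$ restricts to $\epsilon$, the type $A$ branching rule (e.g.\ \cite{jameskerber84}) feeds directly into the type $B$ statement. Your caveat about the bookkeeping with explicit coset representatives and the conjugated module ${}^gW$ is well placed but entirely routine; nothing fails there. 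The boundary cases $l=0$ and $l=n$ behave as you describe. In short: the proposal is sound, and it supplies a proof the paper chose to omit by reference.
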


\section{Results}\label{section: results}
\subsection{Character formulas and irreducible decompositions}
We now have the tools to compute the irreducible decompositions of $\C[(\Z/m\Z)^n].$ It turns out that the decompositions differ depending on the parity of $m$ because $\Z/m\Z$ has only one element which is fixed under negation when $m$ is odd, but two such elements when $m$ is even.

\begin{definition}
    Consider the action of $B_n$ on the set $(\mathbb{Z}/(2k+1)\mathbb{Z})^n$ given by permuting and negating entries. We define the \textit{odd generalized parking space} to be the linearization of this action as in Definition \ref{linearization}:  $$OP_{n,k}=  \mathbb{C}[(\mathbb{Z}/(2k + 1)\mathbb{Z})^n]$$
    
\end{definition}
\begin{remark}
    \cite{ARR15} associated to each Weyl Group $W$ a natural representation called its $W$-parking space. For  $W=B_n,$ the type $B$ parking space is $OP_{n, n} = \mathbb{C}[(\Z/(2n+1)\Z)^n].$ 
\end{remark}

We can define an analogous representation for even $m.$

\begin{definition}
For any natural number $\ell$, consider the group action of $B_n$ on $(\mathbb{Z}/(2\ell)\mathbb{Z})^n$ given by permuting and negating entries. The \textit{even generalized parking space} is the linearization of this action: 
   $$ EP_{n,\ell} = \mathbb{C}[(\mathbb{Z}/2\ell\mathbb{Z})^n]$$

\end{definition}
For $\sigma\in B_n,$ let $\ell_{odd}(\sigma)$ be the number of odd cycles in $\sigma$, and let $\ell_{even}(\sigma)$ be the number of even cycles in $\sigma$. We can find character formulas for the representations defined above in terms of these values. 

\begin{lemma}\label{lemma: odd character formula}
   Let $\chi_{n,k}$ be the character of $OP_{n,k}.$ Then for $\sigma\in B_n,$ 
   $$\chi_{n,k}(\sigma)= (2k+1)^{\ell_{even}(\sigma)}.$$
    
\end{lemma}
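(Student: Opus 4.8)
The plan is to use the fact that the character of the linearization of a $G$-set is the number of fixed points of the group element acting on that set. Concretely, in the basis of $\C[(\Z/(2k+1)\Z)^n]$ indexed by the elements of $(\Z/(2k+1)\Z)^n$, the matrix of $\rho(\sigma)$ is a permutation matrix, so its trace $\chi_{n,k}(\sigma)$ counts exactly the number of $x \in (\Z/(2k+1)\Z)^n$ with $\sigma \cdot x = x$. Thus the whole problem reduces to counting the fixed vectors of the signed-permutation action, and I would state this reduction first.

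Next I would exploit the disjoint cycle decomposition of $\sigma$. Since disjoint cycles act on disjoint sets of coordinates, a vector $x$ is fixed by $\sigma$ if and only if its restriction to each cycle's coordinate set is fixed by that cycle. Hence the number of fixed vectors factors as a product over the cycles of $\sigma$, and it suffices to count the fixed configurations contributed by a single cycle of length $\ell$ on coordinates $i_1, \dots, i_\ell$.

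The core computation is the single-cycle analysis. Chasing the fixed-point equations around the cycle, each value $x_{i_{t+1}}$ is determined by $x_{i_t}$ up to a sign coming from the corresponding entry of $\sigma$, so all coordinates in the cycle are determined by $x_{i_1}$, and closing the loop yields the single consistency condition $x_{i_1} = s\, x_{i_1}$, where $s$ is the product of the signs along the cycle. If the cycle is even then $s = +1$, the condition is vacuous, $x_{i_1}$ ranges freely, and the cycle contributes a factor of $2k+1$. If the cycle is odd then $s = -1$, so the condition is $2x_{i_1} = 0$ in $\Z/(2k+1)\Z$; because $2k+1$ is odd, $2$ is invertible modulo $2k+1$, forcing $x_{i_1} = 0$ and hence the entire cycle's coordinates to vanish, contributing a factor of $1$. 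Multiplying over all cycles gives $(2k+1)^{\ell_{even}(\sigma)}\cdot 1^{\ell_{odd}(\sigma)} = (2k+1)^{\ell_{even}(\sigma)}$, as claimed.

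I expect the only delicate point to be the single-cycle bookkeeping: carefully verifying that going once around a cycle collapses all the coordinate constraints into the one relation $x = s\,x$, and identifying $s$ with the sign that distinguishes even from odd cycles. The step where oddness of $m = 2k+1$ is genuinely used is the invertibility of $2$, which is precisely the phenomenon flagged earlier that $\Z/m\Z$ has a unique negation-fixed element when $m$ is odd; this is what makes each odd cycle contribute $1$ rather than a larger factor.
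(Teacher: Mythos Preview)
Your proof is correct and follows essentially the same approach as the paper: both reduce the character computation to counting fixed points of the permutation action on $(\Z/(2k+1)\Z)^n$, factor over disjoint cycles, and show that each even cycle contributes a factor of $2k+1$ while each odd cycle forces all its coordinates to be $0$ and contributes a factor of $1$. Your single-cycle analysis via the consistency condition $x_{i_1}=s\,x_{i_1}$ and the invertibility of $2$ modulo $2k+1$ is slightly more explicit than the paper's phrasing, but the argument is the same.
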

\begin{proof}
  By definition, $OP_{n,k} = \mathbb{C}[(\mathbb Z/(2k+1)\mathbb Z)^n]$ as a $B_n-$representation. Since $\sigma\in B_n$ acts by permuting basis elements of this linearization, its matrix representation is a permutation matrix. Hence, the trace is simply the number of ones on the diagonal, or equivalently, the number of basis vectors which $\sigma $ fixes.   
  
  Suppose $(a_1,\dots, a_n)\in OP_{n,k}$ is a fixed point of $\sigma.$ Then if $\sigma(i)=j,$ we have $a_i=a_j,$ and if $\sigma(i)=-j,$ then $a_i=-a_j.$ Also, an odd number of sign flips occurs in an odd cycle, and hence $a_i=-a_i$ for each $i$ in an odd cycle. Hence, we must have $a_i=0$ for each $i$ in an odd cycle. However,  any even cycle is conjugate to a cycle which does not negate any elements, so we simply have $a_i=a_j$ for all $i$ and $j$ in $\sigma.$ Therefore, we obtain all fixed points by assigning an element of $\Z/(2k+1)\Z $ to each even cycle in $\sigma$, so there are $(2k+1)^{\ell_{even}(\sigma)}$ fixed points, giving the desired formula. 
\end{proof}
We can use a similar argument for the even generalized parking space. 
\begin{lemma}\label{lemma: even character formula}
  Let $\psi_{n,\ell}(\sigma)$ be the character of $EP_{n,\ell}$. Then for $\sigma\in B_n,$ 
  $$\psi_{n,\ell}(\sigma)= 2^{\ell_{odd}(\sigma)}(2\ell)^{\ell_{even}(\sigma)}.$$
\end{lemma}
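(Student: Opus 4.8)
The plan is to mirror the argument used for Lemma~\ref{lemma: odd character formula}, since the underlying representation is again the linearization of a $B_n$-set. First I would note that $\sigma \in B_n$ acts on $EP_{n,\ell} = \mathbb{C}[(\mathbb{Z}/2\ell\mathbb{Z})^n]$ by permuting the basis vectors indexed by tuples $(a_1,\dots,a_n) \in (\mathbb{Z}/2\ell\mathbb{Z})^n$. Hence its matrix is a permutation matrix, and $\psi_{n,\ell}(\sigma) = \trace(\rho(\sigma))$ simply counts the number of tuples fixed by $\sigma$. Because the character is a class function, I may replace $\sigma$ by any convenient conjugate when analyzing its cycles.

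Next I would analyze the fixed-point condition cycle by cycle. If $\sigma(i) = j$ the constraint is $a_i = a_j$, and if $\sigma(i) = -j$ the constraint is $a_i = -a_j$. For an even cycle, the total number of sign flips around the cycle is even, and the cycle is conjugate to one that negates no entries, so the condition reduces to forcing all coordinates within the cycle equal to a common value; this value may be any element of $\mathbb{Z}/2\ell\mathbb{Z}$, giving $2\ell$ choices per even cycle and thus the factor $(2\ell)^{\ell_{even}(\sigma)}$. For an odd cycle, the odd number of sign flips forces $a_i = -a_i$ for each index $i$ in the cycle, i.e. $2a_i \equiv 0 \pmod{2\ell}$.

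The key point, and the only place where this proof genuinely diverges from the odd case, is counting solutions to $2a \equiv 0 \pmod{2\ell}$. Whereas in $\mathbb{Z}/(2k+1)\mathbb{Z}$ the equation $2a = 0$ has the unique solution $a = 0$, in $\mathbb{Z}/2\ell\mathbb{Z}$ it has exactly the two solutions $a \in \{0, \ell\}$ — these are precisely the two elements fixed under negation when the modulus is even. This yields $2$ choices per odd cycle, hence the factor $2^{\ell_{odd}(\sigma)}$. Multiplying the independent choices across all cycles gives
$$\psi_{n,\ell}(\sigma) = 2^{\ell_{odd}(\sigma)}(2\ell)^{\ell_{even}(\sigma)},$$
as claimed.

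I do not anticipate a serious obstacle here: the argument is essentially routine once the structural parallel with Lemma~\ref{lemma: odd character formula} is set up, and the whole novelty is the elementary observation that negation on $\mathbb{Z}/2\ell\mathbb{Z}$ has two fixed points rather than one. The one step meriting a sentence of care is justifying that an even cycle may be taken (up to conjugacy) to perform no negations, so that its coordinates are freely and jointly determined; this is exactly the reduction already invoked in the proof of the odd character formula.
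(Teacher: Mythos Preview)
Your proposal is correct and follows essentially the same approach as the paper: count fixed tuples of the permutation action, observe that even cycles contribute $2\ell$ free choices while odd cycles force $a_i = -a_i$ and hence $a_i \in \{0,\ell\}$, and multiply. If anything, you spell out the trace-equals-fixed-points step and the conjugacy reduction for even cycles more explicitly than the paper does.
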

\begin{proof}
    Again, recall that $EP_{n,\ell} = \mathbb{C}[(\mathbb Z/(2\ell)\mathbb Z)^n]$ as a $B_n-$representation. Let $(a_1\dots, a_n)\in EP_{n,\ell}$ be a fixed point of $\sigma\in B_n$. As in the odd parking space, all indices in an even cycle must be equal of any value. However, both $0$ and $\ell$ are fixed under negation, so the indices in an odd cycle can either be all $0$ or all $\ell.$ This gives $2^{\ell_{odd}(\sigma)}(2\ell)^{\ell_{even}(\sigma)}$ fixed points.
\end{proof}

Recall by Theorem \ref{specificSchurWeyl} that irreducible representations  of $B_n$ can be indexed by bipartitions of $n$: 
$$V_{\lambda, \mu} = Ind_{B_{l} \times B_{m}}^{B_n} L_{\lambda} \boxtimes (L_{\mu} \otimes \epsilon)$$
where $L_{\lambda}$ is the irreducible representation of $S_n$ associated with $\lambda$. We use Schur-Weyl duality to define a decomposition of $OP_{n,k}$ in terms of this labeling of irreducible representations. 
\begin{proposition}\label{OPdecomp}
   $$
        OP_{n, k} \cong \bigoplus_{(\lambda,\mu) \vDash n} d(\lambda, k+1) d(\mu, k) V_{\lambda,\mu}
   $$
    where the sum is over bipartitions $(\lambda,\mu)$ of $n$ where $\lambda$ has at most $k+1$ rows and $\mu$ has at most $k$ rows, and the Weyl dimension formula gives
    $$d(\lambda,a)=\begin{cases} \frac{\prod_{i < j \leq a} (\lambda_i - \lambda_j+j-i)}{\prod_{i<j \leq a}{(j-i)}}. & \text{rows}(\lambda)\le a \\
    0 & \text{rows}(\lambda)>a\end{cases}$$
\end{proposition}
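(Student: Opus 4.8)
The plan is to realize $OP_{n,k}$ as a tensor power and apply the Schur--Weyl decomposition of Corollary \ref{finalschurweyl} separately to two pieces, exploiting the wreath product structure $B_n = S_2 \wr S_n$. First I would observe that, as a $B_n$-representation, $OP_{n,k} = \mathbb{C}[(\Z/(2k+1)\Z)^n] \cong V^{\otimes n}$, where $V = \mathbb{C}[\Z/(2k+1)\Z]$ is the $B_1 = S_2$-representation on which the nontrivial element acts by negation, $S_n$ permutes the tensor factors, and the $n$ copies of $S_2$ act factorwise. One then decomposes $V = V_+ \oplus V_-$ into $S_2$-isotypic pieces: $V_+$, spanned by $e_0$ together with the vectors $e_i + e_{-i}$, is the trivial isotypic component of dimension $k+1$, while $V_-$, spanned by the $e_i - e_{-i}$, is the sign isotypic component of dimension $k$.

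Next I would decompose $V^{\otimes n} = \bigoplus_\tau W_\tau$ over all functions $\tau : \{1,\dots,n\} \to \{+,-\}$, where $W_\tau = \bigotimes_i V_{\tau(i)}$. Because $V_\pm$ are $S_2$-subrepresentations, the factorwise $S_2$-action preserves each $W_\tau$, while $S_n$ permutes the summands by $W_\tau \mapsto W_{\tau\circ\pi^{-1}}$. Hence for each $j$ the subspace $\bigoplus_{|\tau^{-1}(-)| = j} W_\tau$ is $B_n$-stable, and $B_n$ acts transitively on its summands with stabilizer $B_{n-j}\times B_j$; the standard identification of a transitive permutation action with an induced representation then gives
$$\bigoplus_{|\tau^{-1}(-)|=j} W_\tau \cong \Ind_{B_{n-j}\times B_j}^{B_n}\bigl(V_+^{\otimes(n-j)} \boxtimes V_-^{\otimes j}\bigr).$$
Now I apply Corollary \ref{finalschurweyl} on each factor: as $B_{n-j}$-representations $V_+^{\otimes(n-j)} \cong \bigoplus_{\lambda\vdash n-j} d(\lambda,k+1)\, V_{\lambda,\emptyset}$, since negation acts trivially and $V_{\lambda,\emptyset}$ is just $L_\lambda$ pulled back along $\pi$; and as $B_j$-representations $V_-^{\otimes j} \cong \bigoplus_{\mu\vdash j} d(\mu,k)\, V_{\emptyset,\mu}$, since each negation acts on $V_-$ by $-1$, matching the $\epsilon$-twist in the definition of $V_{\emptyset,\mu}$. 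Substituting, using transitivity of induction, and recalling that $\Ind_{B_{n-j}\times B_j}^{B_n}(V_{\lambda,\emptyset}\boxtimes V_{\emptyset,\mu}) = V_{\lambda,\mu}$ from the definition of $V_{\lambda,\mu}$, the reindexing over $j$, $\lambda \vdash n-j$, and $\mu \vdash j$ collapses to $\bigoplus_{(\lambda,\mu)\vDash n}$ and yields the claimed formula.

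The main obstacle is the careful bookkeeping of the wreath product structure in the middle step: verifying that $\bigoplus_{|\tau^{-1}(-)|=j}W_\tau$ genuinely is the induced representation with stabilizer $B_{n-j}\times B_j$, and confirming that the sign by which negations act on $V_-$ is precisely the $\epsilon$-twist, so that $V_-^{\otimes j}$ decomposes into the $V_{\emptyset,\mu}$ rather than their untwisted analogues. Once these identifications are pinned down, the two applications of ordinary Schur--Weyl are independent, since the factors sit in the two sides of the product group $B_{n-j}\times B_j$, and the remainder is formal. As a consistency check one can verify dimensions via $\sum_{j}\binom{n}{j}(k+1)^{n-j}k^j = (2k+1)^n$, and one can alternatively confirm the full statement at the level of characters using Lemma \ref{lemma: odd character formula}.
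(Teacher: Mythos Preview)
Your proof is correct and follows essentially the same route as the paper: decompose $\mathbb{C}[\Z/(2k+1)\Z]$ as a $B_1$-representation into a $(k+1)$-dimensional trivial piece and a $k$-dimensional sign piece, expand the $n$th tensor power accordingly, identify the block with $j$ minus-factors as an induced representation from $B_{n-j}\times B_j$, apply Corollary~\ref{finalschurweyl} to each factor, and reassemble via Definition~\ref{V_{lambda,mu}}. The only difference is presentational: you spell out the transitive $B_n$-action on the summands $W_\tau$ and the stabilizer computation explicitly, whereas the paper simply asserts the induced-representation identity $(W_+\oplus(W_-\otimes\epsilon))^{\otimes n}\cong\bigoplus_i\Ind_{B_i\times B_{n-i}}^{B_n}W_+^{\otimes i}\otimes(W_-\otimes\epsilon)^{\otimes n-i}$.
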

\begin{proof}

For $n=1$, we have $\chi_{1,k}(1)=2k+1, \chi_{1,k}(-1)=1.$ Since the only two irreducible representations of $B_1\cong S_2$ are the trivial and signed representations, $OP_{1,k}\cong \mathbb{C}[\mathbb{Z}/(2k+1)\mathbb{Z}]$ decomposes into $k+1$ copies of the trivial representation and $k$ copies of the signed representation.

Let $W_+$ be a $k+1$-dimensional vector space and $W_-$ be a $k$-dimensional vector space, so that $\mathbb{C}[\mathbb{Z}/(2k+1)\mathbb{Z}] \cong W_+ \oplus (W_- \otimes \epsilon)$ as $B_1\cong S_2$ representations, where $\epsilon$ is the signed character. Then we have an isomorphism of $B_n-$representations:
\begin{align*}
	OP_{n,k} &\cong  (W_+ \oplus (W_- \otimes \epsilon))^{\otimes n} \\
    	&\cong  \bigoplus_{i=0}^n \mathrm{Ind}_{B_i \times B_{n-i}}^{B_n} (W_+)^{\otimes i} \otimes (W_- \otimes \epsilon)^{\otimes n-i}. 
\end{align*}

We can apply Corollary \ref{finalschurweyl} to decompose $(W_+^{\otimes i})$ as an $S_i-$representation and $W_-^{\otimes n-i}$ as an $S_{n-i}-$representation. 
This gives the decomposition 
$$ OP_{n,k}\cong \bigoplus_{i=0}^{n}\bigoplus_{\lambda\vdash i,\mu\vdash n-i} d(\lambda,k+1)d(\mu,k) \mathrm{Ind}_{B_{i}\times B_{n-i}}^{B_n}L_{\lambda} \otimes (L_{\mu}\otimes \epsilon^{\otimes n-i}). $$

 Also, $\epsilon^{\otimes n-i}$ is the signed representation for $B_{n-i}$, so by definition of $V_{\lambda,\mu}$ (Definition \ref{V_{lambda,mu}}),
 
 $$  OP_{n, k} \cong \bigoplus_{(\lambda,\mu) \vDash n} d(\lambda, k+1) d(\mu, k) V_{\lambda,\mu}.\qedhere$$

\end{proof}
\begin{example} \label{ex:WeylOP}
    When $k=1$, we consider only bipartitions $(\lambda,\mu)$ such that $\lambda$ has at most two rows and $\mu$ has at most one row. We will denote bipartitions of this form as $((\lambda_1, \lambda_2),a)$ for some nonnegative integer $a$. Here, the Weyl dimension formula simplifies to $\lambda_1+1-\lambda_2$, so we have 
    \begin{equation}
    OP_{n,1}\cong \bigoplus_{i=0}^n \bigoplus_{\lambda \vdash i} (\lambda_1 - \lambda_2 + 1)V_{((\lambda_1, \lambda_2),n-i)}.
    \end{equation}
\end{example}
We can find a similar decomposition for the generalized even parking space by following the same procedure. 
\begin{proposition}\label{epdecomp }
   $EP_{n,\ell }=\bigoplus_{(\lambda,\mu)\vDash n} d(\lambda, \ell+1)d(\mu,\ell -1)V_{\lambda,\mu}$
     where the sum is over bipartitions $(\lambda,\mu)$ of $n$ where $\lambda$ has at most $\ell +1$ rows and $\mu$ has at most $\ell-1$ rows, and the Weyl dimension formula gives
    $$d(\lambda,a)=\begin{cases} \frac{\prod_{i < j \leq a} (\lambda_i - \lambda_j+j-i)}{\prod_{i<j \leq a}{(j-i)}}. & \text{rows}(\lambda)\le a \\
    0 & \text{rows}(\lambda)>a\end{cases}$$
\end{proposition}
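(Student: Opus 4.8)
The plan is to follow exactly the strategy used in the proof of Proposition~\ref{OPdecomp}, substituting the even base case for the odd one. First I would decompose the $B_1 \cong S_2$-representation $\mathbb{C}[\mathbb{Z}/2\ell\mathbb{Z}]$. Using Lemma~\ref{lemma: even character formula} with $n = 1$, the identity is a single even cycle, so $\psi_{1,\ell}(1) = 2\ell$, while negation is a single odd cycle, so $\psi_{1,\ell}(-1) = 2$. Since $B_1$ has only the trivial and signed characters, solving $a + b = 2\ell$ and $a - b = 2$ shows that $EP_{1,\ell}$ contains $\ell + 1$ copies of the trivial representation and $\ell - 1$ copies of the signed representation. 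This reflects that $0$ and $\ell$ are the two elements of $\mathbb{Z}/2\ell\mathbb{Z}$ fixed under negation, while the remaining $2\ell - 2$ elements form $\ell - 1$ negation-pairs. Hence, writing $W_+$ for an $(\ell+1)$-dimensional space and $W_-$ for an $(\ell-1)$-dimensional space, we obtain $\mathbb{C}[\mathbb{Z}/2\ell\mathbb{Z}] \cong W_+ \oplus (W_- \otimes \epsilon)$ as $B_1$-representations.

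Next I would observe that $EP_{n,\ell} \cong \mathbb{C}[\mathbb{Z}/2\ell\mathbb{Z}]^{\otimes n}$ as a $B_n$-representation, where $B_n$ permutes and negates the tensor factors, and then expand:
\begin{align*}
EP_{n,\ell} &\cong \left(W_+ \oplus (W_- \otimes \epsilon)\right)^{\otimes n} \\
&\cong \bigoplus_{i=0}^n \mathrm{Ind}_{B_i \times B_{n-i}}^{B_n} (W_+)^{\otimes i} \otimes (W_- \otimes \epsilon)^{\otimes n-i}.
\end{align*}
Applying Corollary~\ref{finalschurweyl} to decompose $W_+^{\otimes i}$ as an $S_i$-representation (with $\dim W_+ = \ell + 1$) and $W_-^{\otimes n-i}$ as an $S_{n-i}$-representation (with $\dim W_- = \ell - 1$) introduces the multiplicities $d(\lambda, \ell+1)$ and $d(\mu, \ell-1)$. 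Since $\epsilon^{\otimes n-i}$ is precisely the signed representation of $B_{n-i}$, the induced pieces assemble, by Definition~\ref{V_{lambda,mu}}, into the irreducibles $V_{\lambda,\mu}$, yielding the claimed decomposition.

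The only real point of divergence from the odd case lies in the base step: the even modulus $2\ell$ has two negation-fixed points rather than one, which shifts the dimensions of the trivial and signed isotypic pieces from $(k+1, k)$ to $(\ell+1, \ell-1)$. Everything downstream---the tensor expansion, the induction-of-tensor-powers identity, and the Schur--Weyl bookkeeping---is formally identical to Proposition~\ref{OPdecomp}, so I do not anticipate a genuine obstacle; the work is entirely in correctly propagating these two dimensions through the formula for $d(\lambda, a)$.
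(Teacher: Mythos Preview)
Your proposal is correct and follows essentially the same approach as the paper's own proof: compute the $B_1$-decomposition of $\mathbb{C}[\mathbb{Z}/2\ell\mathbb{Z}]$ via the character formula to obtain $W_+ \oplus (W_- \otimes \epsilon)$ with $\dim W_+ = \ell+1$ and $\dim W_- = \ell-1$, then feed these dimensions through the tensor-power/induction/Schur--Weyl argument of Proposition~\ref{OPdecomp}. Your extra remarks (solving $a+b=2\ell$, $a-b=2$ explicitly and interpreting the two negation-fixed points) are helpful elaborations but do not change the argument.
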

\begin{proof}
Using our character formula, we see that $\psi_{1,\ell }(1)=(2L)^12^0=2L$ and $\psi_{1,\ell }(-1)= (2\ell )^02^1=2.$ We still have that the trivial and signed characters are all the irreducible characters of $B_1,$ so we must have $\ell +1$ copies of the trivial representation and $\ell -1$ copies of the signed representation. Hence, if $W_+$ is an $\ell +1$ dimensional vector space and $W_-$ is an $\ell -1 $ dimensional vector space we have 
$$ EP_{1,\ell }\cong  W_+\oplus (W_-\otimes \epsilon).$$
Then by the same argument as in Proposition \ref{OPdecomp},
\begin{align*}
    EP_{n,\ell }&\cong  \left( W_+\oplus W_-\otimes\epsilon \right)^{\otimes n} \\
    &\cong \bigoplus_{i=0}^n \mathrm{Ind}_{B_i\times B_{n-i}}^{B_n} W_+^{\otimes i} \otimes (W_-\otimes \epsilon)^{\otimes n}\\
    &\cong \bigoplus_{(\lambda,\mu)\vDash n}d(\lambda,\ell +1)d(\mu,\ell-1) L_{\lambda}\otimes(L_{\mu}\otimes \epsilon^{n-i})\\
    &\cong \bigoplus_{(\lambda,\mu)\vDash n}d(\lambda,L+1)d(\mu,L-1)V_{\lambda,\mu}.\qedhere
\end{align*}
\end{proof}
\subsection{Construction of extensions when $m$ is fixed}

We were able to find extensions from $B_n$ to $B_{n+1}$ of $OP_{n,1}= \C[(\Z/3\Z)^n]$ for all $n.$ We find it useful to restrict attention to a subset of irreducible  $B_{n+1}-$representations. 

\begin{definition}
Define $X_{n,m}$ to be the set of bipartitions $(\lambda,\mu)\vDash n$ such that the coefficient of $V_{\lambda,\mu}$ is nonzero in the decomposition of $\C[(\Z/m\Z)^n]$ into irreducible representations of $B_n.$ 

If $\lambda= (\lambda_1,\dots ,\lambda_m)$, set $\lambda' = (\lambda_1+1 ,\lambda_2,\dots, \lambda_m).$ Then define
$$\tilde{X}_{n,m}= \{ (\lambda',\mu)| (\lambda,\mu)\in X_{n,m}\}.$$

\end{definition}
If $m = 2k + 1$ is odd, then recall from Proposition \ref{OPdecomp} that $X_{n,m}$ contains exactly the  bipartitions of $n$ such that $\mathrm{rows}(\lambda)\le k+1$ and $\mathrm{rows}(\mu)\le k$. Similarly, if $m = 2\ell$ is even then recall from Proposition \ref{epdecomp } that $X_{n,m}$ contains exactly the bipartitions of $n$ such that $\mathrm{rows}(\lambda)\le \ell +1$ and $\mathrm{rows}(\mu)\le \ell -1$.

In the case $m=3, k=1$ and $\tilde{X}_{n,3}$ determines a natural choice of irreducible $B_{n+1}-$representations to construct an extension: for each irreducible representation $V_{\lambda,\mu}$, we have that $V_{\lambda,\mu}$ appears in the restriction of $V_{\lambda',\mu}$ by the branching rule. We show that some linear combination of irreducible representations corresponding to $\tilde{X}_{n,3} $ suffices to extend $OP_{n, 1}.$ 
\begin{theorem} \label{thm:OPn1Thm}
For all $n,$ the $B_{n+1}-$representation
$$\bigoplus_{(\lambda,a)\in X_{n,m}}  (\lceil \frac{\lambda_1 + 1 - \lambda_2}{3} \rceil ) V_{(\lambda',a)}$$
restricts to $OP_{n, 1}.$ In fact, it is the only $B_{n+1}-$extension of $OP_{n,1}$ which decomposes into irreducibles indexed by $\tilde{X}_{n,3}.$ 
\end{theorem}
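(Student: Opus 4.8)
The plan is to reduce the statement to a finite linear system governed by the type $B$ branching rule and then to solve it. First I would note that every bipartition in $\tilde{X}_{n,3}$ has the form $((\lambda_1+1,\lambda_2),a)$ with $\lambda_1\ge\lambda_2\ge 0$, so it has at most two rows in the first coordinate and at most one row in the second; removing a single box preserves these row bounds, so the restriction of each $V_{((\lambda_1+1,\lambda_2),a)}$ is supported on bipartitions $((\mu_1,\mu_2),b)$ with $\mathrm{rows}(\mu_\bullet)\le 2$ and $\le 1$, i.e.\ exactly the indexing set $X_{n,3}$ of $OP_{n,1}$ from Example~\ref{ex:WeylOP}. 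Thus it suffices to match, for every $((\mu_1,\mu_2),b)\in X_{n,3}$, the multiplicity of $V_{((\mu_1,\mu_2),b)}$ produced on restriction against its target multiplicity $\mu_1-\mu_2+1$.

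For existence, I would compute these multiplicities directly. By the branching rule, $V_{((\mu_1,\mu_2),b)}$ arises from removing a box from exactly three potential sources: $((\mu_1+1,\mu_2),b)$ (always a removable corner), $((\mu_1,\mu_2+1),b)$ (a legal member of $\tilde{X}_{n,3}$ only when $\mu_1\ge\mu_2+2$), and $((\mu_1,\mu_2),b+1)$ (in $\tilde{X}_{n,3}$ only when $\mu_1\ge\mu_2+1$). Writing $e=\mu_1-\mu_2$ and substituting the proposed coefficients $\lceil (\lambda_1'-\lambda_2')/3\rceil$, the total multiplicity of $V_{((\mu_1,\mu_2),b)}$ is
$$\left\lceil\tfrac{e+1}{3}\right\rceil+[e\ge 2]\left\lceil\tfrac{e-1}{3}\right\rceil+[e\ge 1]\left\lceil\tfrac{e}{3}\right\rceil.$$
The two bracketed terms vanish exactly in the ranges where their indicators are off, since $\lceil (e-1)/3\rceil=0$ for $e\in\{0,1\}$ and $\lceil e/3\rceil=0$ for $e=0$, so I may drop the indicators and invoke the elementary identity $\lceil\tfrac{e-1}{3}\rceil+\lceil\tfrac{e}{3}\rceil+\lceil\tfrac{e+1}{3}\rceil=e+1$ for three consecutive integers. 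This equals the target $\mu_1-\mu_2+1$, establishing that the representation restricts to $OP_{n,1}$.

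For uniqueness, I would exploit the triangular shape of the same system. In the multiplicity equation indexed by $((\mu_1,\mu_2),b)$, the unknown $c_{((\mu_1+1,\mu_2),b)}$ is the only one whose first-row length equals $\mu_1+1$; the other two unknowns have first-row length $\mu_1$. Hence the equations can be solved by induction on the first-row length $\lambda_1'$: the base case $\lambda_1'=1$ forces $c_{((1,0),n)}=1$, and each subsequent equation determines $c_{((\mu_1+1,\mu_2),b)}$ uniquely from strictly smaller data. Since $(\mu,b)\mapsto((\mu_1+1,\mu_2),b)$ is a bijection $X_{n,3}\to\tilde{X}_{n,3}$, the system is square with unit diagonal in this order, so the solution is unique; equivalently, if a virtual combination of the $V_{(\lambda',a)}$ restricts to $0$, the same induction forces all coefficients to vanish.

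The main obstacle is the careful bookkeeping of removable corners at the boundary of the indexing region: tracking exactly when $((\mu_1,\mu_2+1),b)$ and $((\mu_1,\mu_2),b+1)$ are legitimate members of $\tilde{X}_{n,3}$, which is governed by the strict inequality $\lambda_1'>\lambda_2'$ and by $\mu_2,b\ge 0$, rather than the arithmetic. The pleasant point that the ``missing'' ceiling terms evaluate to $0$ exactly where the corresponding corners are illegal is what lets the three-term identity apply uniformly, so once the corner analysis is pinned down the remaining verification is routine.
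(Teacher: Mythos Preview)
Your proof is correct and follows essentially the same approach as the paper: both reduce the problem to a linear system via the type $B$ branching rule, carry out the same case analysis on removable corners governed by the difference $\mu_1-\mu_2$, and exploit the resulting triangular structure for uniqueness. The only cosmetic difference is that the paper \emph{derives} the coefficients by solving the recurrence $c_a=a-c_{a-1}-c_{a-2}$, whereas you \emph{verify} the stated formula directly using the identity $\lceil (e-1)/3\rceil+\lceil e/3\rceil+\lceil (e+1)/3\rceil=e+1$; these are two sides of the same computation.
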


\begin{proof}
Recall by proposition \ref{OPdecomp} that 
$$  
         OP_{n,1}\cong \bigoplus_{i=0}^n \bigoplus_{\lambda \vdash i} (\lambda_1 - \lambda_2 + 1)V_{((\lambda_1, \lambda_2),n-i)}, 
   $$

so for  $k=1 $ ($m=3$), we have that 
$$X_{n,3}=\{ ((\lambda_1,\lambda_2),a)\vDash n \}$$ 
and 
$$\tilde X_{n,3}= \{(\lambda',a)| (\lambda,a)\in X_{n,3}\}.$$

 Thus, finding an extension to $B_{n+1}$ comprised of irreducible representations corresponding to $\tilde X_{n,3}$ is equivalent to finding coefficients  $c_{\lambda' }$ satisfying the following equation:
\begin{equation}\label{eq: extension problem for OPn1}
    \bigoplus_{i=0}^n \bigoplus_{\lambda \vdash i} (\lambda_1 - \lambda_2 + 1)V_{((\lambda_1, \lambda_2),n-i)} \cong \bigoplus_{(\lambda, a) \in X_n} c_{\lambda'}\mathrm{Res}_{B_n}^{B_{n+1}} V_{\lambda',a}.
\end{equation}   
We will find a recursive formula for $c_{\lambda'}$ in terms of $\lambda_1-\lambda_2,$ as only $\lambda_1-\lambda_2$ determines the coefficient of $V_{\lambda,a}$ in the Weyl dimension formula.

First suppose $\lambda_1=\lambda_2.$ Then the only irreducible representation corresponding to  $\tilde{X}_{n,3}$ whose restriction contains $V_{\lambda,a}$ is $V_{\lambda',a},$ since partitions  $(\mu,b)\in \tilde X_{n,1}$ have $(\mu_1>\mu_2)$ by construction. Hence, $$c_{\lambda'}= \lambda_1-\lambda_2+1=1.$$

Suppose $\lambda_1-\lambda_2=1.$ There are exactly two bipartitions in $\tilde X_{n,3}$ such that removing one box yields $(\lambda,a): (\lambda',a)$ and $(\lambda,a+1).$ Hence, 
\begin{align*}
    2 &= \lambda_1-\lambda_2+1\\
    &= c_{\lambda}+c_{\lambda'}.
\end{align*}
But recall that $\lambda_1-1=\lambda_2,$ so by the previous case,
$$ c_{\lambda}= c_{(\lambda_1-1,\lambda_2)'}=1.$$ Therefore, 
$$c_{\lambda'}=1.$$ 

Suppose $\lambda_1-\lambda_2\ge 2.$ Then a box can be added to either row of $\lambda$ to obtain a partition in $\tilde X_{n,3}.$ Hence there are three bipartitions in $\tilde X_{n,3}$ such that removing one box yields $(\lambda,a):$ $(\lambda',a), ((\lambda_1,\lambda_2+1),a),$ and $(\lambda,a+1).$ This gives the recursive formula  

\begin{equation} 
\lambda_1-\lambda_2 +1 = c_{\lambda'}+ c_{(\lambda_1-1,\lambda_2)'} + c_{(\lambda_1-1,\lambda_2+1)'} .
\label{eq:recursion}
\end{equation}
 It follows by induction that if $\lambda_1-\lambda_2=\sigma_1-\sigma_2$, then $c_{\lambda'}=c_{\sigma'}.$ Therefore, for any nonnegative integer $a$ we can define $c_a=c_{\lambda'}$ for any partition $\lambda$ with $\lambda_1-\lambda_2+1=a. $ For $\lambda$ with $\lambda_1-\lambda_2\ge 2$ transforms the recursion above into the form 

 $$c_a= \begin{cases} 1 & a\le 2\\
    a- c_{a-1}-c_{a-2} &\text{rows}(\lambda)>2
\end{cases}$$
This recurrence relation has a unique solution $c_a=\lceil \frac{a}{3}\rceil.$  Therefore, we indeed have the desired decomposition

\begin{equation*} OP_{n,1}= \bigoplus_{(\lambda,a)\in X_{n,m}}  (\lceil \frac{\lambda_1 + 1 - \lambda_2}{3} \rceil ) V_{(\lambda',a)}.
\qedhere
\end{equation*}
\end{proof}

\subsection{Construction of extensions when $n$ is fixed}

We found extensions from $B_n$ to $B_{n+1}$ of $\C[(\Z/m\Z)^n]$ when $n\le 2$ for all $m.$ We fix $n$ instead of $m$ and create a restriction matrix $R_{n, m}$ that encodes the data of the restriction of irreducible representations labeled by elements of $\tilde{X}_{n,m}$. To do so, we will index the rows and columns of $R_{n, m}$ with elements of $X_{n,m}$ and $\tilde{X}_{n,m}$ in an order that ensures invertibility.

\begin{definition}
    Let $\lambda= (\lambda_1,\dots,\lambda_k)$ and $\mu= (\mu_1,\dots, \mu_k) $ be partitions. Then, we say $\lambda> \mu$ in \textit{lexicographical order} iff $\lambda_i>\mu_i$ when $i$ is the least index such that $\lambda_i \neq \mu_i.$ 

    Let $(\lambda^{(1)},\mu^{(1)})$ and $(\lambda^{(2)},\mu^{(2)})$ be bipartitions of $n.$ Then we say  $(\lambda^{(1)},\mu^{(1)})> (\lambda^{(2)},\mu^{(2)})$ if $\lambda^{(1)}>\lambda^{(2)}$ in lexicographical order, or $\lambda^{(1)}=\lambda^{(2)}$ and $\mu^{(1)}>\mu^{(2)}.$
   
\end{definition}
\begin{definition}
   Let us enumerate $X_{n,m}$ and $\tilde{X}_{n,m}$ in reverse lexicographical order, from greatest to smallest. Let $R_{n,m}$ be the square matrix with rows indexed by $X_{n,m}$ and columns indexed by $\tilde{X}_{n,m}$. We define the entries of $R_{n,m}$ as follows: $a_{(\lambda,\mu),(\sigma,\tau)}=1$ if $V_{\lambda,\mu}\subseteq \mathrm{Res}^{B_{n+1}}_{B_n}V_{\sigma,\tau}$ and $a_{(\lambda,\mu),(\sigma,\tau)}=0$ otherwise. 

\end{definition}

\begin{example}
    Let $n=2,m=3$. In reverse lexicographical order, we have 
    $$ X_{2,3}= \{((2,0),\emptyset), ((1,1),\emptyset),((1,0),1),(\emptyset, 2)\}$$
    and 
    $$\tilde{X}_{2,3}= \{((3,0),\emptyset), ((2,1),\emptyset),((2,0),1),((1,0), 2)\}.$$ This gives 
    $$ R_{2,3}= \begin{pmatrix}
        1 & 1& 1& 0\\
        0&1&0& 0 \\
        0&0&1&1\\
        0&0&0&1
    \end{pmatrix}.
    $$
\end{example}

\begin{proposition}
    $R_{n,m}$ is an upper triangular matrix with ones on the diagonal for all positive integers $n$ and $m.$
\end{proposition}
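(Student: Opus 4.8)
The plan is to read the matrix entries directly off the branching rule and then track how the reverse lexicographic order interacts with the operation $\lambda\mapsto\lambda'$ of adding a box to the first row. By the Branching Rule for type $B$, the restriction $\mathrm{Res}^{B_{n+1}}_{B_n}V_{\sigma,\tau}$ is the multiplicity-free sum $\bigoplus_{(\sigma',\tau')\in R_{\sigma,\tau}}V_{\sigma',\tau'}$, so the entry $a_{(\lambda,\mu),(\sigma,\tau)}$ equals $1$ precisely when $(\lambda,\mu)$ is obtained from $(\sigma,\tau)$ by deleting a single box, and $0$ otherwise. Since every column index of $R_{n,m}$ has the form $(\rho',\nu)$ for a unique $(\rho,\nu)\in X_{n,m}$, the whole statement reduces to a combinatorial comparison between the row index $(\lambda,\mu)$ and the bipartition $(\rho,\nu)$ underlying the column.

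First I would verify that $(\lambda,\mu)\mapsto(\lambda',\mu)$ is an order-preserving bijection from $X_{n,m}$ to $\tilde X_{n,m}$. Injectivity and surjectivity are immediate from the definition of $\tilde X_{n,m}$, and order-preservation follows because adding $1$ to the first part of a partition does not change the outcome of a lexicographic comparison: if the first parts already differ the comparison is unaffected, and if they agree it is decided by later parts, which are untouched. Consequently, enumerating both index sets in reverse lexicographic order makes the $i$-th row correspond to the $i$-th column, so the diagonal entries of $R_{n,m}$ are exactly $a_{(\lambda,\mu),(\lambda',\mu)}$.

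For the diagonal, note that $\lambda'=(\lambda_1+1,\lambda_2,\dots)$ satisfies $\lambda_1+1>\lambda_2$, so the final box of its first row is a removable corner; deleting it returns $\lambda$, whence $(\lambda,\mu)$ is obtained from $(\lambda',\mu)$ by one box deletion and $a_{(\lambda,\mu),(\lambda',\mu)}=1$. For upper triangularity the key claim is: \emph{if $(\lambda,\mu)$ is obtained from $(\rho',\nu)$ by deleting one box, then $(\lambda,\mu)\ge(\rho,\nu)$ in the bipartition order.} I would prove this by cases on the location of the deleted box. If the box is deleted from the second partition $\nu$, or from a row $r\ge 2$ of $\rho'$, then the first partition of the result still begins with $\rho_1+1>\rho_1$, so it is lexicographically larger than $\rho$ and the whole result is strictly greater than $(\rho,\nu)$. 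The only remaining possibility deletes the corner at the end of the first row of $\rho'$, returning exactly $(\rho,\nu)$. Thus every nonzero entry lies on or above the diagonal, giving upper triangularity.

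The argument is essentially bookkeeping, and I expect the only delicate point to be confirming that the box-addition $\lambda\mapsto\lambda'$ respects the reverse lexicographic order, since this is what aligns the $i$-th row with the $i$-th column and thereby gives meaning to ``upper triangular''; once that alignment is fixed, the case analysis establishing $(\lambda,\mu)\ge(\rho,\nu)$ is routine.
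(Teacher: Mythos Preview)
Your proof is correct and follows essentially the same route as the paper's: both arguments establish that the map $(\lambda,\mu)\mapsto(\lambda',\mu)$ is an order-preserving bijection aligning rows with columns, verify the diagonal entries via the branching rule, and then perform the same case split (box removed from $\nu$, from a row $r\ge 2$ of $\rho'$, or from the first row of $\rho'$) to conclude that every nonzero entry satisfies $(\lambda,\mu)\ge(\rho,\nu)$. Your exposition is slightly more explicit about why the bijection preserves order and why the first-row box is removable, but the structure and key observations are identical.
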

\begin{proof}
We aim to show that $R_{n,m}$ is upper triangular with all ones on the diagonal. 
First, observe that the bijection $f: X_n\to \tilde{X}_n$ given by $\lambda \mapsto \lambda'$  preserves lexicographical order. Then since $V_{\lambda,\mu}\subset \mathrm{Res}^{B_{n+1}}_{B_n}V_{\lambda',\mu}$ for all $(\lambda,\mu)\vDash n,$ we have $a_{(\lambda,\mu),(\lambda',\mu)}=1$ for all $(\lambda,\mu)\vDash n.$

Now suppose that $V_{\lambda,\mu}\subset \mathrm{Res}^{B_{n+1}}_{B_n}V_{\sigma',\tau}$ for some bipartitions $(\lambda,\mu)$ and $(\sigma,\tau)$ of $n.$ Then either $\lambda=\sigma'$ and $\mu$ is $\tau$ with a box removed, or $\mu=\tau$ and $\lambda $ is $\sigma'$ with a box removed. In the first case, $\lambda_1$ has more boxes than $\sigma_1$, so $\lambda>\sigma$ in lexicographical order. Therefore, the corresponding one is above the diagonal. 

In the second case, observe that removing a box from the first row is necessarily less in lexicographical order than any other way of removing a box from $\sigma'.$ Hence, $\sigma \leq \lambda,$ so the corresponding entry is on or above the diagonal.  

Therefore, all ones are on or above the diagonal.
\end{proof}
It follows from linear algebra that the matrix $R_{n,m}$ is always invertible. 
We can use the inverse-restriction matrix to find extensions of $\C[(\Z/m\Z)^n] $ consisting of irreducible representations corresponding to $\tilde X_{n,m}.$ suppose $|X_{n,m}|= a.$ Label the standard basis vectors $e_1,\dots, e_a$ of $\Z^a$ by $\tilde X_{n,m}$ in reverse lexographical order. Call this span $\tilde M. $ Similarly, we can label the standard basis vectors $e_1,\dots, e_a$ of $\Z^a$ by $X_{n,m},$ calling this space $M.$ Then $R_{n,m}$ defines an invertible linear transformation $\tilde M\to M$ where we have 
$$ R_{n,m}\cdot e_{(\lambda,\mu)}= \sum_{V_{\sigma,\tau}\subset \mathrm{Res}^{B_{n+1}}_{B_n}V_{\lambda,\mu }}e_{\sigma,\tau}\in M.$$
Extending by linearity, any vector $v \in \tilde M$ with nonnegative entries corresponds to some $B_{n+1}$-representation $V, $ and  $R_{n,m}\cdot v$ corresponds to $\mathrm{Res}^{B_{n+1}}_{B_n} V$ by linearity. Hence, for $v\in M $ corresponding to a $B_n$-representation $V,$ $R_{n,m}^{-1}\cdot v$ is a vector of nonnegative integers if and only if $V$ extends to a $B_{n+1}$-representation with irreducible representations indexed by $\tilde{X}_{n,m}$. If such an extension exists, $R_{n,m}^{-1}(v)$ gives its irreducible decomposition.  We can use this method to classify the existence or non-existence of such extensions in general.  
 
\begin{theorem}
$OP_{1,k} $ extends to a representation of $B_2$ for all $k \geq 1.$  This is the unique extension such that each irreducible corresponds to $\tilde{X}_{1,m}$ where $m=2k+1.$
\end{theorem}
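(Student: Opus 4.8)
The plan is to specialize the inverse-restriction-matrix method developed just above to the smallest case $n=1$, where the relevant index sets each have only two elements and the whole system can be solved by hand. First I would pin down $X_{1,m}$ and $\tilde X_{1,m}$ explicitly. Either from Proposition \ref{OPdecomp} applied at $n=1$, or directly from the base-case computation $\chi_{1,k}(1)=2k+1$ and $\chi_{1,k}(-1)=1$, the representation $OP_{1,k}$ decomposes as $(k+1)\,V_{(1),\emptyset}\oplus k\,V_{\emptyset,(1)}$, i.e. $k+1$ copies of the trivial and $k$ copies of the signed representation of $B_1$. Hence for $k\ge 1$ both coefficients are nonzero, so $X_{1,m}=\{((1),\emptyset),\,(\emptyset,(1))\}$, and applying the box-adding map $\lambda\mapsto\lambda'$ gives $\tilde X_{1,m}=\{((2),\emptyset),\,((1),(1))\}$, a pair of bipartitions of $2$ indexing irreducibles of $B_2$.

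Next I would assemble the restriction matrix $R_{1,m}$ from the type $B$ branching rule. Removing a single box from $((2),\emptyset)$ yields only $((1),\emptyset)$, so $\mathrm{Res}^{B_2}_{B_1}V_{(2),\emptyset}=V_{(1),\emptyset}$; removing a box from $((1),(1))$ yields $((1),\emptyset)$ (from $\mu$) and $(\emptyset,(1))$ (from $\lambda$), so $\mathrm{Res}^{B_2}_{B_1}V_{(1),(1)}=V_{(1),\emptyset}\oplus V_{\emptyset,(1)}$. Indexing rows by $X_{1,m}$ and columns by $\tilde X_{1,m}$ in reverse lexicographic order, this produces
\[
R_{1,m}=\begin{pmatrix}1 & 1\\ 0 & 1\end{pmatrix},
\]
the upper-triangular, unit-diagonal form guaranteed in general by the preceding proposition, and in particular invertible.

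Finally I would solve $R_{1,m}\,v = w$ for the coefficient vector $v$ of the extension, where $w=(k+1,\,k)^{\mathsf{T}}$ records the multiplicities of $OP_{1,k}$. Back-substitution gives $c_{(1),(1)}=k$ and then $c_{(2),\emptyset}=(k+1)-k=1$, so the candidate extension is
\[
V = V_{(2),\emptyset}\oplus k\,V_{(1),(1)},
\]
whose restriction to $B_1$ is $OP_{1,k}$ by construction. Since $R_{1,m}$ is invertible, $v=R_{1,m}^{-1}w$ is the unique vector supported on $\tilde X_{1,m}$ restricting to $OP_{1,k}$, which gives the uniqueness assertion directly.

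The step deserving the most care is confirming that the computed solution has \emph{nonnegative} integer entries, since invertibility of $R_{1,m}$ only forces a unique integer solution, not a genuine representation. Here nonnegativity is immediate because both $1$ and $k$ are nonnegative for every $k\ge 1$, but it is precisely this point that upgrades "formally solvable" to "an actual $B_2$-extension exists," so I would state it explicitly rather than treat it as routine.
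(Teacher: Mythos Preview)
Your argument is correct and matches the paper's proof essentially step for step: you identify the same two-element sets $X_{1,m}$ and $\tilde X_{1,m}$, assemble the same $2\times 2$ upper-triangular restriction matrix, solve against the multiplicity vector $(k+1,k)$ to obtain the extension $V_{(2),\emptyset}\oplus k\,V_{(1),(1)}$, and deduce uniqueness from invertibility. The only cosmetic difference is that you back-substitute while the paper writes out $R_{1,m}^{-1}$ explicitly.
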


\begin{proof}
For $n=1,$ we have that $X_{1,m} = \{ ((1), \emptyset) , (\emptyset, (1)) \}$ and $\tilde{X}_{1,m} = \{ ((2), \emptyset), (1, 1) \}$. We can remove a box from $((2), \emptyset )$ and obtain $((1), \emptyset ) \in X_1$, and we can remove a box from $((1), (1))$ and obtain either element of $X_{1,m}$. Thus
$$
R_{1,m}=\begin{pmatrix}
1 & 1 \\
0 & 1
\end{pmatrix}, \qquad 
R_{1,m}^{-1}= \begin{pmatrix}
1 & -1 \\
0 & 1
\end{pmatrix}.
$$
To identify the resulting vector $v$, we use the Weyl dimension formula (Theorem \ref{weyldimformula}) to compute  

$$d(\emptyset,k)=1, \; d((1),k)=k.$$

Therefore, Proposition \ref{OPdecomp} gives the coefficients $$a_{((1),\emptyset)}= d((1),k+1)d(\emptyset,k)= k+1$$ and $$a_{(\emptyset,(1,0))}= d(\emptyset,k+1)d((1),k)=k,$$

so the vector corresponding to $OP_{1,k}$ is $$v=(k+1,k).$$ We can now compute 
$$ R_{1,m}^{-1}\cdot v= (1,k).$$ Both entries are positive, so $V_{(2),\emptyset}\oplus kV_{(1),(1)}$ is the unique extension of $OP_{1,k}$ to $B_2$ with only irreducibles corresponding to $\tilde X_{1,m}$.
\end{proof} 
\begin{theorem}
     $EP_{1,\ell}$ extends to a representation of $B_2$  for  all $\ell.$ This is the unique extension that decomposes into irreducible representations corresponding to $\tilde{X}_{1, m}$ where $m=2\ell .$
\end{theorem}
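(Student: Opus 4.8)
The plan is to run the same inverse-restriction-matrix computation used for $OP_{1,k}$ in the previous theorem, now feeding in the even decomposition of Proposition \ref{epdecomp } in place of Proposition \ref{OPdecomp}. For $n=1$ the relevant index sets are essentially unchanged: writing $m=2\ell$, the set $X_{1,m}$ consists of the bipartitions $((1),\emptyset)$ and $(\emptyset,(1))$ of $1$ (the second appearing once $\ell\ge 2$), while $\tilde{X}_{1,m}=\{((2),\emptyset),\,((1),(1))\}$. First I would record that the branching rule produces exactly the same restriction matrix as in the odd case, since $\mathrm{Res}^{B_2}_{B_1}V_{(2),\emptyset}=V_{(1),\emptyset}$ and $\mathrm{Res}^{B_2}_{B_1}V_{(1),(1)}=V_{(1),\emptyset}\oplus V_{\emptyset,(1)}$, so that
$$R_{1,m}=\begin{pmatrix}1&1\\0&1\end{pmatrix},\qquad R_{1,m}^{-1}=\begin{pmatrix}1&-1\\0&1\end{pmatrix}.$$

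Next I would compute the coefficient vector $v$ of $EP_{1,\ell}$ from Proposition \ref{epdecomp } together with the Weyl dimension formula (Theorem \ref{weyldimformula}). Using $d(\emptyset,a)=1$ and $d((1),a)=a$, the coefficients are $a_{((1),\emptyset)}=d((1),\ell+1)\,d(\emptyset,\ell-1)=\ell+1$ and $a_{(\emptyset,(1))}=d(\emptyset,\ell+1)\,d((1),\ell-1)=\ell-1$, so $v=(\ell+1,\ell-1)$. Applying the inverse restriction matrix gives $R_{1,m}^{-1}\cdot v=(2,\ell-1)$, which identifies the candidate $B_2$-representation as $2\,V_{(2),\emptyset}\oplus(\ell-1)\,V_{(1),(1)}$.

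Finally I would invoke the general principle established just before these theorems: because the entries of $R_{1,m}^{-1}\cdot v$ are nonnegative integers, this vector records the unique $B_2$-extension of $EP_{1,\ell}$ whose irreducible constituents are indexed by $\tilde{X}_{1,m}$, which completes the argument. The one point to watch is the boundary case $\ell=1$, where $m=2$ and the action of $B_1$ on $\Z/2\Z$ is trivial: here $a_{(\emptyset,(1))}=\ell-1=0$, so $(\emptyset,(1))$ drops out of $X_{1,2}$ and the matrix degenerates to the $1\times 1$ identity, yet the formula $2\,V_{(2),\emptyset}\oplus(\ell-1)\,V_{(1),(1)}$ still reads off correctly. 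I do not expect a genuine obstacle here—the entire content is the nonnegativity of the two coordinates of $R_{1,m}^{-1}\cdot v$, which holds automatically for every $\ell\ge 1$—so the only real care needed is to substitute the even Weyl dimensions $\ell+1$ and $\ell-1$ (rather than the odd values $k+1$ and $k$) into the identical linear-algebra machinery.
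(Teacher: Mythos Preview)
Your proposal is correct and follows essentially the same approach as the paper: compute the same $2\times 2$ restriction matrix $R_{1,m}$, feed in the even coefficient vector $v=(\ell+1,\ell-1)$ from Proposition \ref{epdecomp }, and read off the extension $2V_{(2),\emptyset}\oplus(\ell-1)V_{(1),(1)}$ from $R_{1,m}^{-1}v=(2,\ell-1)$. Your added remark on the degenerate case $\ell=1$ is a nice touch that the paper omits.
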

   
\begin{proof}
Just as in the odd case, we have $X_{1,m} = \{ ((1), \emptyset) , (\emptyset, (1)) \}$ and $\tilde{X}_{1,m} = \{ ((2), \emptyset), (1, 1) \}$. This yields the same restriction matrix 
$$
    R_{1,m}= \begin{pmatrix}1&1\\0&1\end{pmatrix},
    \qquad R_{1,m}^{-1}= \begin{pmatrix}1&-1\\0&1\end{pmatrix}.
$$
It also remains true that  $d(1,k)=k, d(\emptyset,k)= 1$ Then $EP_{1,\ell}$ corresponds to the vector $v= (\ell+1,\ell-1),$ so 
$R_{1,m}^{-1}\cdot v= (2,\ell-1).$ This vector has nonnegative entries, so it corresponds to the valid extension 
$$2V_{((2),\emptyset)}\oplus (\ell-1)V_{(1),(1)}.
$$
\end{proof}

\begin{theorem}
$OP_{2,k}$ extends to a representation of $B_3$ for all $k \geq 1$. This is the unique extension such that each irreducible corresponds to $\tilde{X}_{n,m}$ where $m=2k+1.$
\end{theorem}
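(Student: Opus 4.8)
The plan is to reuse the inverse-restriction-matrix method, now holding $n=2$ fixed and writing $m=2k+1$. First I would record $X_{2,m}$ and $\tilde X_{2,m}$. By Proposition~\ref{OPdecomp}, $X_{2,m}$ is the set of bipartitions $(\lambda,\mu)\vDash 2$ with $\mathrm{rows}(\lambda)\le k+1$ and $\mathrm{rows}(\mu)\le k$. There are only five bipartitions of $2$ in all, and for $k\ge 2$ every one of them meets the row bounds, so $X_{2,m}=\{((2),\emptyset),((1,1),\emptyset),((1),(1)),(\emptyset,(2)),(\emptyset,(1,1))\}$; for $k=1$ the bipartition $(\emptyset,(1,1))$ is excluded and one recovers the four-element set $X_{2,3}$ of the worked example (this case is also subsumed by Theorem~\ref{thm:OPn1Thm}). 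Applying $\lambda\mapsto\lambda'$ gives $\tilde X_{2,m}=\{((3),\emptyset),((2,1),\emptyset),((2),(1)),((1),(2)),((1),(1,1))\}$.

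Next I would build $R_{2,m}$ by listing, for each $(\sigma',\tau)\in\tilde X_{2,m}$, the bipartitions obtained by deleting a single box (the type $B$ branching rule), entering a $1$ in each corresponding row. We already know in general that $R_{2,m}$ is upper triangular with ones on the diagonal, hence invertible; here the goal is just to pin it down. Ordering rows and columns by reverse lexicographic order as above, I expect
\[
R_{2,m}=\begin{pmatrix}1&1&1&0&0\\0&1&0&0&0\\0&0&1&1&1\\0&0&0&1&0\\0&0&0&0&1\end{pmatrix},\qquad R_{2,m}^{-1}=\begin{pmatrix}1&-1&-1&1&1\\0&1&0&0&0\\0&0&1&-1&-1\\0&0&0&1&0\\0&0&0&0&1\end{pmatrix},
\]
the inverse being fastest to read off from $R_{2,m}=I+N$ with $N^{3}=0$. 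For $k=1$ the relevant matrix is the top-left $4\times4$ block, which is exactly the $R_{2,3}$ of the example.

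Then I would compute the coordinate vector $v$ of $OP_{2,k}$ in the $X_{2,m}$ basis. Proposition~\ref{OPdecomp} gives the coefficient of $V_{\lambda,\mu}$ as $d(\lambda,k+1)\,d(\mu,k)$, and the Weyl dimension formula (Theorem~\ref{weyldimformula}) supplies $d(\emptyset,a)=1$, $d((1),a)=a$, $d((2),a)=\binom{a+1}{2}$, and $d((1,1),a)=\binom{a}{2}$. This yields
\[
v=\Bigl(\tbinom{k+2}{2},\ \tbinom{k+1}{2},\ k(k+1),\ \tbinom{k+1}{2},\ \tbinom{k}{2}\Bigr),
\]
and after simplification I anticipate
\[
R_{2,m}^{-1}v=\Bigl(1,\ \tbinom{k+1}{2},\ k,\ \tbinom{k+1}{2},\ \tbinom{k}{2}\Bigr).
\]
Every entry is a nonnegative integer for all $k\ge 1$, so by the discussion preceding the theorem this vector corresponds to a genuine $B_3$-representation, namely $V_{(3),\emptyset}\oplus\binom{k+1}{2}V_{(2,1),\emptyset}\oplus kV_{(2),(1)}\oplus\binom{k+1}{2}V_{(1),(2)}\oplus\binom{k}{2}V_{(1),(1,1)}$, whose restriction to $B_2$ is $OP_{2,k}$; invertibility of $R_{2,m}$ makes it the unique extension indexed by $\tilde X_{2,m}$.

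The calculations are routine once the framework is in place, so the work is really bookkeeping: getting every box-removal right so that $R_{2,m}$ is exact, and handling the $k=1$ versus $k\ge 2$ split cleanly (for $k=1$ the fifth coordinate $\binom{1}{2}=0$ vanishes, so the two cases agree). The only step that is not purely mechanical is checking that the first coordinate of $R_{2,m}^{-1}v$ collapses to $1$ after the cancellation $-\binom{k+1}{2}+\binom{k+1}{2}$; the remaining coordinates are manifestly binomial coefficients and therefore nonnegative integers.
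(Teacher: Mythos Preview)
Your proposal is correct and follows exactly the same inverse-restriction-matrix approach as the paper: the same $R_{2,m}$ and $R_{2,m}^{-1}$, the same coordinate vector $v$ for $OP_{2,k}$, and the same verification that every entry of $R_{2,m}^{-1}v$ is a nonnegative integer. Your arithmetic is in fact slightly cleaner than the paper's---the third coordinate of $R_{2,m}^{-1}v$ is indeed $k$ as you write (the paper records it as $k^2$, a harmless slip since both are positive for $k\ge 1$).
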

\begin{proof}
For $k=1$, we already know that such an extension with $\tilde{X}_{2,3}$ exists. For each $k\ge 2$, we have the same irreducibles, since two boxes cannot fill more than two rows. In reverse lexicographical order,  we have 
$$X_{n,m}= \{ ((2,0),\emptyset),((1,1),\emptyset),((1,0),1),(\emptyset, 2),(\emptyset,(1,1))\},$$
$$\tilde X_{n,m}= \{((3,0),\emptyset),((2,1),\emptyset),((2,0),(1,0)),((1,0),(2,0)),((1,0),(1,1))\}.$$

and hence the same restriction matrix: 
$$ R_{2,m}= \begin{pmatrix}
1 & 1 & 1 & 0 & 0 \\
0 & 1 & 0 & 0 & 0 \\
0 & 0 & 1 & 1 & 1 \\
0 & 0 & 0 & 1 & 0 \\
0 & 0 & 0 & 0 & 1 \\
\end{pmatrix}, \quad R_{2,m}^{-1}= \begin{pmatrix}
1 & -1 & -1 & 1 & 1 \\
0 & 1 & 0 & 0 & 0 \\
0 & 0 & 1 & -1 & -1 \\
0 & 0 & 0 & 1 & 0 \\
0 & 0 & 0 & 0 & 1 \\
\end{pmatrix}.$$ 
Now we can compute the coefficients of each irreducible given by the Weyl dimension formula.  We already have that $d(\emptyset,k)=1$ and $d((1,0),k)=k.$ Also, 
$$d((2,0),k)= \frac{k(k+1)}{2}$$
and 
$$d((1,1),k)= \frac{k(k-1)}{2}.$$

Hence, $OP_{2,k}$ corresponds to the vector
$$v= \left( \frac{(k+1)(k+2)}{2},\frac{k(k+1)}{2}, k(k+1), \frac{k(k+1)}{2}, \frac{k(k-1)}{2}\right)$$
The second, fourth, and fifth entries of $R_{2,m}^{-1} \cdot v$ are single entries of $v$ which are positive as $k \geq 1$.
The first entry of $R_{2,m}^{-1}\cdot v$ is 
$$\frac{(k+1)(k+2)}{2} - k(k+1) + \frac{k(k-1)}{2} = 1 > 0$$ 
and the third entry is 
$$ k(k+1)- \frac{k(k+1)}{2}- \frac{k(k-1)}{2} = k^2 >0.$$

All entries in $v$ are positive, so this method constructs a valid extension as claimed.
\end{proof}

The strategy of only considering irreducible representations of $B_{n+1}$ corresponding to bipartitions in $\tilde X_{n,m}$ does not generalize beyond $n=1$ for $m$ even and $n=2$ for $m$ odd, as the candidate extension vector given by $R_{n,m}^{-1}\cdot v$ does not always have nonnegative coefficients. For example, $EP_{2,4}$ does not admit such an extension.

For $EP_{2,\ell}$, we were able to find an extension by hand by solving the system of linear equations given by the branching rule. 

\begin{proposition}
    $EP_{2,\ell }$ extends to a representation of $B_3$ for all $k.$
\end{proposition}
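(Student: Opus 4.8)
The plan is to abandon the restriction, which fails already for $EP_{2,4}$, that the extension be built only from irreducibles indexed by $\tilde{X}_{2,m}$, and instead to search among \emph{all} ten irreducibles $V_{\sigma,\tau}$ with $(\sigma,\tau)\vDash 3$. Writing a candidate extension as $W=\bigoplus_{(\sigma,\tau)\vDash 3}c_{\sigma,\tau}V_{\sigma,\tau}$, the condition $\mathrm{Res}^{B_3}_{B_2}W\cong EP_{2,\ell}$ becomes, through the type $B$ branching rule, a linear system: one equation for each of the five $B_2$-irreducibles occurring in $EP_{2,\ell}$, with the unknowns $c_{\sigma,\tau}$ required to be nonnegative integers. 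Solving this system over the nonnegative integers is the whole content of the proposition.

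First I would record the target coefficients of $EP_{2,\ell}$. By Proposition \ref{epdecomp } and the Weyl dimension formula (Theorem \ref{weyldimformula}) the multiplicities are $\tfrac{(\ell+1)(\ell+2)}{2}$ for $V_{(2),\emptyset}$, $\tfrac{\ell(\ell+1)}{2}$ for $V_{(1,1),\emptyset}$, $\ell^2-1$ for $V_{(1),(1)}$, $\tfrac{\ell(\ell-1)}{2}$ for $V_{\emptyset,(2)}$, and $\tfrac{(\ell-1)(\ell-2)}{2}$ for $V_{\emptyset,(1,1)}$. Next I would apply the branching rule to each of the ten $B_3$-irreducibles to determine which of these five its restriction contains; for instance $V_{(3),\emptyset}$ restricts to $V_{(2),\emptyset}$ alone, whereas $V_{(2),(1)}$ restricts to $V_{(2),\emptyset}\oplus V_{(1),(1)}$. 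Recording all such incidences produces the five equations explicitly.

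With the system written down, it remains to exhibit a single nonnegative integer solution valid for every $\ell\ge 1$. The key observation is that the only coupled constraint is the multiplicity $\ell^2-1$ of $V_{(1),(1)}$, which can only be supplied by the four irreducibles $V_{(2),(1)}$, $V_{(1,1),(1)}$, $V_{(1),(2)}$, $V_{(1),(1,1)}$ whose restrictions contain $V_{(1),(1)}$; each of these also contributes to one of the four corner multiplicities, while $V_{(3),\emptyset}$, $V_{(1,1,1),\emptyset}$, $V_{\emptyset,(3)}$, $V_{\emptyset,(1,1)}$ supply the corners' slack. I would make this explicit by routing the entire multiplicity $\ell^2-1$ through just $V_{(2),(1)}$ and $V_{(1),(2)}$: assigning $\tfrac{\ell(\ell-1)}{2}$ copies of $V_{(1),(2)}$ and $\tfrac{(\ell+2)(\ell-1)}{2}$ copies of $V_{(2),(1)}$, then forcing $\ell+2$ copies of $V_{(3),\emptyset}$, with $V_{(1,1,1),\emptyset}$ and $V_{\emptyset,(1,1)}$ taking the remaining two corners and every other multiplicity set to zero. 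One then verifies the five equations and checks that each of these multiplicities is a nonnegative integer for all $\ell\ge 1$.

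The \emph{main obstacle} is nonnegativity rather than solvability: over $\mathbb{Q}$ the underdetermined system is trivial, but the substance of the proposition, and what the $\tilde X$-only method cannot guarantee, is that a solution lies in the nonnegative orthant. The crux is the budget inequality $\ell^2-1\le \tfrac{(\ell+1)(\ell+2)}{2}+\tfrac{\ell(\ell-1)}{2}=\ell^2+\ell+1$, which says the two chosen irreducibles have enough combined room, together with the sharper $\ell^2-1\ge \tfrac{\ell(\ell-1)}{2}$, which lets the clean two-term routing close up uniformly and so removes any need to treat small $\ell$ separately.
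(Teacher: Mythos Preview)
Your approach is essentially the same as the paper's: set up the branching-rule system for the five $B_2$-irreducibles in $EP_{2,\ell}$, then exhibit by hand one nonnegative integer solution and verify the five equations. The paper simply picks a different explicit witness, using all four ``mixed'' irreducibles $V_{(2),(1)},V_{(1,1),(1)},V_{(1),(2)},V_{(1),(1,1)}$ together with $V_{(2,1),\emptyset}$ and $V_{(3),\emptyset}$, whereas you route the $V_{(1),(1)}$ multiplicity through only $V_{(2),(1)}$ and $V_{(1),(2)}$ and absorb the two remaining corners with $V_{(1,1,1),\emptyset}$ and $V_{\emptyset,(1,1,1)}$. Your choice works (the coefficients $\ell+2,\ \tfrac{(\ell+2)(\ell-1)}{2},\ \tfrac{\ell(\ell-1)}{2},\ \tfrac{\ell(\ell+1)}{2},\ \tfrac{(\ell-1)(\ell-2)}{2}$ are nonnegative integers for all $\ell\ge 1$ and the five equations check). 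One small slip: in two places you write $V_{\emptyset,(1,1)}$ for the $B_3$-irreducible when you mean $V_{\emptyset,(1,1,1)}$.
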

\begin{proof}

Consider the $B_3$-representation
\begin{align*}
     V_{\ell}&= \frac{(\ell-1)(\ell-2)}{2} V_{1,(1,1)}\oplus \frac{\ell(\ell-1)}{2}V_{1,2}\oplus 2(\ell-1)V_{(1,1),1}\oplus 2(\ell-1)V_{(2),(1)}\\ &\quad \oplus\left(\frac{\ell(\ell+1)}{2}-2(\ell-1)\right)V_{(2,1),\emptyset}\oplus (\ell+1)V_{3,\emptyset}
\end{align*}

Note that $\frac{\ell(\ell+1)}{2}-2(\ell-1)$ is nonnegative for positive integers $\ell, $ so $V_{\ell}$ is a valid representation. We claim $V_{\ell}$ is an extension of $EP_{2,{\ell}}$ for all positive integers $\ell.$ 

Let $W_m=\mathrm{Res}^{B_3}_{B_2} V_{\ell}=\bigoplus a_{(\lambda,\mu)}V_{(\lambda,\mu)}.$ Then \begin{itemize}
\item $a_{\emptyset,(1,1)}= \frac{(\ell-1)(\ell-2)}{2}= d(\emptyset, \ell+1)d((1,1),\ell-1)$
\item $a_{\emptyset, (2)}=\frac{\ell(\ell-1)}{2}= d(\emptyset,\ell+1)d((2),\ell-1).$ 
\item $a_{(1),(1)}= \frac{(\ell-1)(\ell-2)}{2}+\frac{\ell(\ell-1)}{2}+2(\ell-1)= (\ell-1)(\ell+1)=d((1),\ell+1)d((1),\ell-1)$
\item $a_{(1,1),\emptyset}= 2(\ell-1)+\frac{\ell(\ell+1)}{2}-2(\ell-1)= \frac{\ell(\ell+1)}{2}= d((1,1),\ell+1)d(\emptyset,\ell-1)$
\item$  a_{(2),\emptyset}= 2(\ell-1)+\frac{\ell(\ell+1)}{2}-2(\ell-1)+ \ell+1= \frac{(\ell+1)(\ell+2)}{2}= d((2),\ell+1)d(\emptyset,\ell-1)$
\end{itemize}
Therefore, we have that $W_m= EP_{2,\ell}$ and we are done. 
\end{proof}

\section{Data}\label{section: data}

For particular $m$ and $n$ it is possible to use Integer Linear Programs (ILP) to determine whether or not $\C[(\Z/m\Z)^n]$ extends to a $B_{n+1}$-representation. 
\subsection{Implementing ILP}
Suppose $\chi_1,\dots, \chi_k$ are the irreducible characters of $B_{n+1}$ and $\psi$ is the character of $\C[(\Z/m\Z)^n].$ Then the existence of an extension is equivalent to the existence of a set of nonnegative integers $c_1,\dots, c_k$ such that 
$$\psi = \bigoplus_{1\le i\le k} \mathrm{Res}^{B_{n+1}}_{B_n} c_i \chi_i.$$

 Solving for these $c_i$ under the inequalities $c_i\ge 0$ is an instance of integer linear programming. To implement ILP, we must convert this problem into a system of linear equations with imposed inequalities. A conjugacy class of $B_{n+1}$ restricts to a conjugacy class of $B_n$ if it fixes at least one value of $\{1,\dots, n\}$. 
 
 The first step is to generate the restricted character table of $B_{n+1}$, which is just the columns of the $B_{n+1}$ character table represented by a conjugacy class that restricts to $B_n$. 
 The second step is to use the character formulas given by Lemma \ref{lemma: odd character formula} ($m$ odd) and Lemma \ref{lemma: even character formula} ($m$ even) to calculate the character of the $B_n$ action on $\C[(\Z/m\Z)^n].$

 The last step is to use ILP to check if there is a linear combination of all rows of the restricted character table that adds to the character of $\C[(\Z/m\Z)^n]$. If such a linear combination exists, then so does an extension.

The main obstacle with ILP is that the runtime becomes prohibitive rather quickly, making it difficult to check larger examples.
\subsection{Data Collected}
This method was used to confirm the following extensions:
\begin{itemize}
    \item The existence of an extension of $OP_{n,k}$ was confirmed for all pairs $(n,k)$ such that $1 \leq n \leq 7$ and $1 \leq k \leq 10$. Note that this includes an extension of $BP_n$ for $1 \leq n \leq 7$.
    \item The existence of an extension of $EP_{n,\ell}$ was confirmed for all pairs $(n, \ell)$ such that $1 \leq n \leq 7$ and $1 \leq \ell \leq 10$.
\end{itemize}

\printbibliography

\end{document}